\DeclareMathAlphabet{\mathpzc}{OT1}{pzc}{m}{it}
\newtheorem{theorem}{Theorem}[section]
\newtheorem{proposition}[theorem]{Proposition}
\newtheorem{lemma}[theorem]{Lemma}
\newtheorem{corollary}[theorem]{Corollary}
\theoremstyle{definition}
\newtheorem{definition}{Definition}[section]
\theoremstyle{remark}
\newtheorem{remark}{Remark}
\newcommand{\R}{\mathbb{R}}
\newcommand{\Z}{\mathbb{Z}}
\newcommand{\C}{\mathbb{C}}
\newcommand{\Q}{\mathbb{Q}}
\newcommand{\N}{\mathbb{N}}
\newcommand{\tth}{^{\mbox{\rm{\scriptsize{th}}}}}
\newcommand{\abs}[1]{\left| #1 \right|}
\newcommand{\of}{\circ}
\newcommand{\ve}{\varepsilon}
\newcommand{\norm}[1]{\abs{\abs{#1}}}
\newcommand{\set}[1]{\left\lbrace #1 \right\rbrace}
\newcommand{\mc}{\mathcal}
\newcommand{\mf}{\mathfrak}
\def\id{\operatorname{id}}
\def\Diff{\operatorname{Diff}}
\def\Aut{\operatorname{Aut}}
\def\Lie{\operatorname{Lie}}
\def\St{\operatorname{St}}
\def\Hom{\operatorname{Hom}}
\def\Stab{\operatorname{Stab}}
\newcommand{\gen}{($\mf G$)}
\title[On the Rigidity of Weyl Chamber Flows]{On the Rigidity of Weyl Chamber Flows and Schur Multipliers as Topological Groups}
\author{Kurt Vinhage}
\begin{document}

\begin{abstract}
We effectively conclude the local rigidity program for generic restrictions of partially hyperbolic Weyl chamber flows. Our methods replace and extend previous ones by circumventing computations made in Schur multipliers. Instead, we construct a natural topology on $H_2(G,\Z)$, and rely on classical Lie structure theory for central extensions.
\end{abstract}

\maketitle

\section{Introduction}

The results of this paper make significant progress for the program for local rigidity of partially hyperbolic algebraic actions of $\R^k \times \Z^l$ with $k+l \ge 2$. In particular, it settles the rigidity problem for a broad class of actions on semisimple groups. An action $\alpha$ of a group $A$ on a homogeneous space $M = G / H$ is {\it homogeneous} if the action is given by some homomorphism $\varphi : A \to G$ and:

\[\alpha^a(gH) = \varphi(a)gH \]

Faithful homogeneous actions can be identified with subgroups of the Lie group $G$ isomorphic to $A$.

Such actions are among the principal sources of dynamical systems. Geodesic flows on surfaces of constant curvature, linear flow on tori, circle rotations, suspensions of toral automorphisms, and unipotent flows all fit into this class. A $C^1$ action of $A$ is {\it (pointwise) partially hyperbolic} if there is a splitting of the tangent bundle invariant under the action, and a distinguished central bundle, and a dense set of the elements of the action act hyperbolically with respect to the central bundle at each point. If the central bundle is integrable, the action is called dynamically coherent, and a dense set of the acting group acts normally hyperbolically with repsect to the corresponding foliation.

In the homogeneous case, the splitting of the tangent bundle corresponds exactly to the eigenspaces of the adjoint action of $A$ on $\Lie(G)$. The Lyapunov exponents correspond to the logarithms of the absolute values of the corresponding eigenvalues.

An algebraic action $\alpha : A \to \Diff^r(M)$ is $C^{l,m,n}$-locally rigid if any $C^l$ action $\alpha'$ close in the  $C^m$ topology is conjugate to a nearby algebraic model via a $C^n$ diffeomorphism. Note that a reparameterization, or perturbation of the embedding $A \to G$ can change the dynamical invariants (like Lyapunov exponents) under $C^1$ conjugates, so we must include such possibilities.

There are two principal sources of rigid partially hyperbolic action. One is the suspesnsion of an action by toral automorphisms, which can be obtained as a homogeneous flow on a solvmanifold. Such systems are better studied as discrete-time systems. The other principal source is semisimple groups, in which one takes a subgroup of an $\R$-split Cartan subalgebra as the acting group. Such actions are called (partially hyperbolic) Weyl chamber flows, and are the subject of this paper.

Fully hyperbolic systems were demonstrated to be $C^{\infty,1,\infty}$ rigid in \cite{ks97} by Katok and Spatzier. In the partially hyperbolic setting, irreducible actions by commuting toral automorphisms were shown to be $C^{\infty,l,\infty}$ rigid for some $l$ in the work of Damjanovic and Katok \cite{kdtoral}. The semisimple case, however, proved more resistant.

Let $G$ be a semisimple Lie group with each of its simple factors of real rank $k_i \ge 2$ (such a Lie group is {\it genuinely higher-rank}), $\Gamma$ be an irreducible, cocompact lattice, and $\R^k \cong N_A \subset G$ be a $\R$-split Cartan subgroup, and $N_K$ be the compact part of the centralizer of $N_A$, so that $N = C_G(N_A) = N_A \times N_K$. The Weyl chamber flow associated to the triple $(G,\Gamma,H)$ is the left action of $N_A$ on the algebraic factor space $M = N_K \backslash G/\Gamma$. Such actions are fully hyperbolic, and proved rigid in \cite{ks97}.

Later, partially hyperbolic actions were studied by making two modifications: First, we consider {\it partially hyperbolic} Weyl Chamber flows (PHWCF) by instead using the phase space $M = G / \Gamma$ (ie, we do not kill the compact part of the centralizer). Second, we consider restrictions of the full action, under a genericity assumption. Let $A \cong Z^k \times \R^l$ be a subgroup of the split Cartan subgroup $N_A$, $\Delta$ be the set of roots of $\mf g = \Lie(G)$ with respect to $\mf h = \Lie(N_A)$, and $\mf a \subset \mf h$ be the smallest subalgebra such that $\exp(\mf a) \supset A$. A {\it generic restriction} of a PHWCF is a triple $(G,\Gamma,A)$, with $G$ and $\Gamma$ and the action as before, and:

\vspace{.25cm}

\begin{itemize}
\item[\gen]  If $r_1,r_2 \in \Delta$ satisfy $r_1|_{\mf a} = \lambda r_2|_{\mf a}$, then $r_1 = \lambda r_2$
\end{itemize}

\vspace{.25cm}

This is the {\it genuinely higher-rank} condition, although it is a slightly stronger restriction than that which appears elsewhere in the literature. It is a natural assumption to ensure that two roots do not collapse into one projectivized root. In such an event, the number of Weyl chambers in $A$ may increase in even standard homogeneous perturbations, changing the structure of the path and cycle spaces. In certain cases, the condition can be relaxed (see \cite{zwang-2}), but the general argument here requires persistence of the Lyapunov structures.

\begin{theorem}[The Main Theorem]
\label{main}
Let $\alpha_0 : A \to \Diff^{\infty}(G/\Gamma)$ be a generic restriction of a PHWCF. Then if $\alpha$ is a $C^1$-small, $C^\infty$ perturbation of $\alpha_0$, there exists a homomorphism $\iota : A \to N$, close to the inclusion, and $C^\infty$ diffeomorphism $h : M \to M$ such that:

\[ \alpha^a \of h(x) = h \of \alpha_0^{\iota(a)}(x)\]
\end{theorem}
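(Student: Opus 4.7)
The plan is to follow the Katok--Spatzier blueprint for rigidity of algebraic partially hyperbolic actions, but to replace the Kac--Moody / Steinberg presentation computations of earlier proofs with the topological treatment of $H_2(G,\Z)$ advertised in the abstract. In outline: (i) persist the coarse Lyapunov structure of $\alpha_0$ to $\alpha$; (ii) assemble leafwise parameterizations into a ``path map'' from the free product of root groups into $\Homeo(\widetilde M)$, where $\widetilde M \to M$ is a suitable cover; (iii) show that the cycle-closing obstruction, which naturally lives in $H_2(G,\Z)$, vanishes, so that the path map descends to a continuous $G$-action on $\widetilde M$ producing a conjugacy $h$; (iv) upgrade $h$ from continuous to $C^\infty$.

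For (i), I would apply Hirsch--Pugh--Shub normal hyperbolicity to $\alpha$; combined with \gen, which prevents distinct projective root classes from collapsing under perturbation, this yields for each projective root class $[r]$ an $\alpha$-invariant foliation $\widetilde W^{[r]}$ whose leaves are $C^\infty$ and $C^0$-close to the coarse Lyapunov leaves of $\alpha_0$, each diffeomorphic to $\exp(\mf g_{[r]})$. Because the leaf dynamics remain close to the algebraic model, each leaf carries a canonical parameterization by the corresponding nilpotent root group $U_{[r]}$, and passing to a cover these parameterizations piece together into a map from $F = \freep_{r\in\Delta} U_r$ into $\Homeo(\widetilde M)$, which I will call the path map.

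For (ii) and (iii), the obstruction to the path map descending to a topological $G$-action is that the Steinberg-type relations among the $U_r$ presenting $G$ must hold modulo the fiber of the universal central extension of $G$, i.e.\ modulo $H_2(G,\Z)$. The main obstacle of the proof is showing this cocycle is trivial: earlier approaches resolved it by explicit Schur multiplier computations, done case by case for each simple factor. Using instead the topology on $H_2(G,\Z)$ built in this paper, I would argue that the $C^1$-smallness of the perturbation forces the cocycle to take values in an arbitrarily small neighborhood of $0$, and that in this topology such a neighborhood contains only the trivial class, the point being that a small connected central extension of the simply connected cover of $G$ must split, by classical Lie structure theory of central extensions. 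Once triviality is established, the path map descends to a continuous $G$-action on $\widetilde M$ commuting with the deck group $\Gamma$, and hence to a continuous $h : M \to M$ intertwining $\alpha$ with an algebraic action $a \mapsto \alpha_0^{\iota(a)}$; the perturbation $\iota : A \to N$ records how the reconstructed copy of $A$ inside $G$ differs from the original embedding. For (iv), $h$ is smooth along each leaf of $\widetilde W^{[r]}$ because it there conjugates two smooth one-parameter actions of $U_{[r]}$, and global $C^\infty$ regularity then follows from an iterated application of Journ\'e's lemma, using that the foliations $\widetilde W^{[r]}$ jointly span $TM$.
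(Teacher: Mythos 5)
Your outline follows the paper's architecture (normal hyperbolicity, path groups built from free products of root subgroups, an obstruction living in $H_2(G,\Z)$, regularity bootstrap), but the key mechanism you describe in step (iii) is not the one that works. You propose that $C^1$-smallness of the perturbation pushes the cycle-closing obstruction into a small neighborhood of $0$ in $H_2(G,\Z)$, and that such a neighborhood contains only the trivial class. This fails twice over. The topology the paper constructs on $H_2(G,\Z)$ is \emph{not} Hausdorff --- as observed in Section \ref{schur-sec}, in all computed cases $\mc C^G(\mc U)/\mc S_{\mbox{\tiny comm}}$ equals $\overline{\set{e}}$ --- so every neighborhood of $0$ is already the whole group, and ``small $\Rightarrow$ trivial'' is vacuous. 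Moreover, the obstruction is not a single class close to $0$ but a continuous homomorphism defined on $\mc C^G(\mc U)/\mc S(\mc U)$, whose elements are represented by cycles of unbounded combinatorial length; small per-leg errors from the perturbation can accumulate along such cycles, so there is no a priori bound on the size of the obstruction. What the paper proves instead (Lemma \ref{key-lemma}) is that $\mc C^G(\mc U)/\mc S(\mc U)$ is \emph{minimally almost periodic}: it admits no nontrivial continuous homomorphism to any locally compact group. That is a structural fact, obtained by building from any such homomorphism a locally compact topological central extension $G_2$ of $G$, splitting it via Proposition \ref{central-splits} (Levi's theorem), and contradicting perfectness of the universal central extension; it makes no appeal to smallness of the perturbation.

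You also omit the non-contractible half of the obstruction. Vanishing of the periodic cycle functional on $\mc C^c(\mc U)$ only yields an induced homomorphism on $\mc C(\mc U)/\mc C^c(\mc U)\cong\pi_1(M)=\Gamma$ with values in $N = N_A\times N_K$, and this must be killed before the conjugacy descends to $M$. Here is where $C^1$-smallness \emph{is} genuinely used: the paper shows via Margulis superrigidity and the normal subgroup theorem (Proposition \ref{lattice-rig}, Corollary \ref{lattice-finite}) that any homomorphism $\Gamma\to N$ has image of uniformly bounded finite order, so shrinking the cocycle forces the image to be trivial. Finally, your leafwise regularity argument (``$h$ conjugates two smooth one-parameter actions'') is circular, since both the conjugacy and the reconstructed leafwise action are a priori only H\"older; the paper obtains leafwise $C^\infty$ regularity from the nonstationary normal forms theory of \cite{ks97}, and only then applies the Journ\'e-type bootstrap across the spanning family of Lyapunov foliations.
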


One may try to draw comparisons between the higher rank case and rank 1 case (flows and iterated transformations), where fully hyperbolic actions are called Anosov. In this setting, the correct notion is structural stability, which gives orbit equivalence via a H\"older homeomorphism. Such equivalence is a conjugacy in the case of $\Z$ actions. In the case of flows, structural stability only gives H\"older rigidity up to time change.

The higher rank assumption therefore plays a crucial role. Indeed, if one looks at individual diffeomorphisms in the action, they may fail to be structurally stable. But perturbations with changed behavior will have small centralizer, in particular will not fit into an abelian group action of higher rank.

We summarize the history of the semisimple case briefly. Damjanovic and Katok were able to show $C^{\infty,2,\infty}$ local rigidity for such actions on $SL(d,\R)$ and $SL(d,\C)$. Their result has been extended to many other cases of simple Lie groups by Damjanovic and Wang:

\begin{enumerate}[(i)]
\item $G$ $\R$-split \cite{damjanovic07}
\item $G$ complex simple \cite{damjanovic07}
\item $G = SO(m,n),SU(m,n)$, $m,n \ge 2$ \cite{zwang-1}
\item $G = Sp(n,\R)$, Certain nongeneric restrictions \cite{zwang-2}
\end{enumerate}

Note that our result improves the previous ones in a few ways: First, we use persistence of local transitivity under $C^1$ perturbations to improve $C^{\infty,2,\infty}$ rigidity to $C^{\infty,1,\infty}$ rigidity. Second, we replace key steps of the existing argument verified ad-hoc for each group with a unified approach.

All of the previous proofs followed the same general scheme: First, apply normal hyperbolicity theory to obtain a H\"older leaf conjugacy for the central foliations. Then, prove cocycle rigidity over perturbations to correct the error of the leaf conjugacy. Finally, a standard argument for algebraic partially hyperbolic actions shows that any $C^0$ conjugacy must in fact be $C^\infty$.

The main obstacle to overcome has been cocycle rigidity. Cocycle rigidity for each action was solved using algebraic K-theory and Schur multipliers. Until now, methods were restricted to showing certain properties of these invariants for each simple Lie group separately. This paper provides a unifying argument, which both replaces and extends the previous work relying on these methods.


\subsection{Structure of the Paper}

In sections \ref{lyap-sec} and \ref{pcf-sec}, a series of standard reductions are made to reduce the problem of local rigidity to a problem of cocycle rigidity for the perturbed action $\alpha$ (see Proposition \ref{correction-cocycle}, Proposition \ref{pcf-suff}, Corollary \ref{lattice-finite}, and Proposition \ref{homeo}).  An astute reader will notice that such reductions only guarantee topological conjugacy. We then apply a method of nonstationary normal forms which guarantees that restricted to each leaf of the Lyapunov foliations, the map is $C^\infty$. Then, the fact that the distributions to these foliations generate the entire tangent space at each point, we get that the map is $C^\infty$ (see \cite{ks97} for further details of this method). 

Upon completing the reductions, the main difficulty is showing vanishing of the periodic cycle functional for contractible cycles in the case when $G$ is simple (we will be to handle the simple case first (Theorem \ref{simple-case}), and prove Theorem \ref{main} using semisimple structure). In section \ref{schur-sec}, properties of the contractible cycle group (the local domain of the periodic cycle functional) are established for the homogeneous action. 

In section \ref{holonomy-sec}, correspondence between contractible cycles for the homogeneous and perturbed actions are constructed. Such an association will guarantee that vanishing for homomorphisms from the homogeneous cycle group also imply it for the perturbed contractible cycle group. In section \ref{proofs-sec}, the proofs of the two rigidity theorems (Theorems \ref{main} and \ref{cocyle-rig}) are given. Section \ref{next-steps-sec} contains comments on remaining cases and the current state of the local rigidity program.

\vspace{.25cm}

\noindent {\it Acknowledgements.} The author would like to thank his PhD advisor Anatole Katok, Federico Rodriguez-Hertz, Zhenqi Wang and Nigel Higson for many illuminating conversations, guidance, and forthcoming assistance.

\section{Lyapunov Exponents, Hyperplanes and Foliations}
\label{lyap-sec}

If $\alpha : \R^k \times \Z^l \to \Diff^1(M)$ is a higher-rank action preserving an ergodic measure $\mu$, then there exists a splitting of the tangent bundle $TM = \displaystyle\bigoplus_{r \in \Delta} E^r$ such that if $v \in E^r$ is a unit vector:

\[ \lim_{n \to \pm\infty} \frac{1}{n}\log \norm{d\alpha^{na}(v)} = r(a) \]

$\Delta \subset \Hom(A,\R)$ are called the {\it Lyapunov exponents} for the action $\alpha$. The kernels of the exponents $H_r = \ker r$, are called the {\it Lyapunov hyperplanes}. The connected components of $\R^{k+l} \setminus \bigcup H_r$ are called the {\it Weyl chambers} the action. Elements contained in a Weyl chamber are called {\it regular}.

Any element $a$ has three distributions:

\begin{eqnarray*}
E_a^s & = & \bigoplus_{r : r(a)< 0} E^r \\
E_a^c & = & E^0 \\
E_a^u & = & \bigoplus_{r : r(a) > 0} E^r
\end{eqnarray*}

In our case, each distribution integrates to a foliation $W_a^s$ and by forming intersections, we can form the {\it coarse Lyapunov distributions} $W^r = \displaystyle\bigcap_{a : r(a) < 0} W_a^s$. This leads to the {\it cycle group} $\mc C_{x_0}(\mc W)$, defined as follows. Fix a base point $x_0$. A {\it Lyapunov path} based at $x_0$ is a sequence of points $(x_0,\dots,x_N)$ such that $x_{i+1} \in W^{r_i}(x_i)$ for some $r_i$. The path is a {\it cycle} if $x_N = x_0$. The space of cycles $\mc C_{x_0}(\mc W)$ forms a group under concatenation (where we allow cancellation of a cycle with its reverse). $\mc C_{x_0}(\mc W)$ carries a natural topology, by writing it as $\mc C_{x_0}(\mc W) = \bigsqcup_{n \in \N_0} \mc C^n_{x_0}(\mc W)$ where $\mc C^n_{x_0}(\mc W)$ is the set of cycles of length $n$. Each $\mc C^n_{x_0}(\mc W)$ carries a natural topology inherited from $M^n$. Furthermore, let $\mc C^c_{x_0}(\mc W)$ be the set of cycles which are contractible in $M$ and $\mc C^G_{x_0}(\mc W)$ denote the set of cycles which lift to cycles in the group $G$.

A cycle $(x_0,\dots,x_N = x_0)$ is {\it stable} if there exists $a \in A$ such that $r_i(a) < 0$ for every $i = 0,\dots,N-1$. Let $\mc S_{x_0}(\mc W)$ denote the normal closure of the group generated by stable cycles.

To carry these structures to the perturbed action, we use normal hyperbolicity theory. It comes in many flavors, but the results we need can be found in \cite{pugh-shub-wilk}, namely:

\begin{theorem}
\label{normal-stability}
Let $f : M \to M$ act normally hyperbolically with respect to a $C^1$ foliation $\mc N$. Then if $g$ is a sufficiently $C^1$-small perturbation of $f$, then $g$ acts normally hyperbolically with respect to a unique foliation $\mc N_g$ close to $\mc N$, and $\mc N$ and $\mc N_g$ are leaf conjugate via a H\"older homeomorphism of $M$.
\end{theorem}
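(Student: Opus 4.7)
The plan is the classical Hirsch--Pugh--Shub graph transform strategy, in the form refined by Pugh--Shub--Wilkinson to yield a leaf conjugacy rather than merely an invariant foliation. The normal hyperbolicity of $f$ provides an invariant splitting $TM = E^s \oplus T\mc N \oplus E^u$ in which the contraction on $E^s$ and expansion on $E^u$ strictly dominate any rates seen on the center $T\mc N$. The first step is to transfer this splitting to $g$. A standard cone field argument, using only $C^1$-smallness of $g - f$, produces a $dg$-invariant splitting $E^s_g \oplus E^c_g \oplus E^u_g$ whose summands are close to those of $f$ in the Grassmannian, and with the same domination ratios up to an arbitrarily small loss. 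In particular, $g$ is again normally hyperbolic, now with center bundle $E^c_g$.

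Next I would integrate $E^c_g$ into a foliation via plaque families. Pick a smooth family $\{D_x\}_{x\in M}$ whose plaques are $C^1$-close approximations to the leaves of $\mc N$, and define a graph transform $T$ by pushing a candidate plaque $D_x$ forward under $g$ and reprojecting onto the model plaque at $g(x)$ along the stable and unstable directions. Normal hyperbolicity makes $T$ a contraction on a suitable Banach space of small sections of the normal bundle to $\mc N$, and the unique fixed plaque family assembles into the desired $g$-invariant foliation $\mc N_g$. Uniqueness of the fixed point yields uniqueness of $\mc N_g$ among foliations $C^0$-close to $\mc N$.

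For the leaf conjugacy $h$, I would use local transversals. For each $x$, pick a transversal $\Sigma_x$ to $\mc N$ at $x$ smaller than the injectivity radii of both foliations; the closeness of $\mc N_g$ to $\mc N$ guarantees that each $\mc N_g$-leaf meets $\Sigma_x$ transversally in a single point. Define $h$ on $\mc N(x)$ by sending $y$ to the unique intersection $\mc N_g(x) \cap \Sigma_y$. Compatibility of these local assignments under changes of transversal follows because both foliations are (respectively, nearly) invariant under $g$, and $h$ extends to a continuous injection $M \to M$, hence a homeomorphism by invariance of domain.

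The principal obstacle is the H\"older regularity of $h$. The exponent is determined by the spectral gap between the rates on $E^s \oplus E^u$ and those on $T\mc N$, and extracting it requires quantitative control of how far $h$ can move points over many iterates of $g$. The argument is a standard telescoping estimate: express $h$ locally as a fixed point of a graph-transform type operator for a reparametrized cocycle over $g$, and bound its modulus of continuity by interpolating between the linear estimates coming from the dominated splitting. This is the technically delicate step carried out in \cite{pugh-shub-wilk}, which I would invoke directly rather than rederive.
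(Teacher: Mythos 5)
The paper does not prove this statement at all: it is quoted verbatim as a black box from the Hirsch--Pugh--Shub / Pugh--Shub--Wilkinson theory (see the sentence immediately preceding it, ``the results we need can be found in \cite{pugh-shub-wilk}, namely:''). Your proposal is therefore doing strictly more than the paper, and the overall architecture you describe --- cone-field persistence of the dominated splitting, graph transform on plaque families to integrate the perturbed center bundle, and a leaf conjugacy whose H\"older exponent comes from the spectral gap --- is indeed the correct skeleton of the argument carried out in that reference. You also correctly identify the H\"older estimate as the technically hard part and sensibly defer to the source; since the paper defers to the same source, your route and the paper's coincide.

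That said, two points in your sketch are imprecise enough to be worth flagging. First, the assertion that ``each $\mc N_g$-leaf meets $\Sigma_x$ transversally in a single point'' is false in general: a leaf may recur and intersect a fixed transversal infinitely often (consider an irrational linear foliation of a torus). What is true is that each \emph{plaque} meets the transversal once; the local-transversal picture only defines $h$ plaque-by-plaque, and assembling these local maps into a global leaf conjugacy that additionally intertwines $f$ and $g$ (i.e.\ satisfies $h(\mc N(f(x))) = \mc N_g(g(h(x)))$) is precisely where the actual Hirsch--Pugh--Shub construction diverges from your naive description --- the conjugacy is built as an invariant section of a bundle via the graph transform, not by matching transversals. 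Second, your sketch omits the hypothesis that makes the theorem true in this generality: plaque expansivity of $f$ with respect to $\mc N$. This holds automatically because $\mc N$ is assumed $C^1$, but it is not a consequence of normal hyperbolicity alone and should be named, since it is exactly what guarantees the uniqueness of $\mc N_g$ and the existence of the leaf conjugacy (rather than merely of an invariant plaque family).
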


Because the acting group is abelian, this procedure done for a single regular element will work for the entire action. By choosing an element of each Weyl chamber, we can also integrate the stable and unstable manifolds for perturbations of each such element (for sufficiently small $C^1$ perturbations). By taking intersections, we can then form the Lyapunov foliations for such a perturbation and carry them to H\"older foliations via the leaf conjugacy described in Theorem \ref{normal-stability}. Thus, all of the smooth structures for the algebraic action can be obtained as H\"older structures for the perturbation $\alpha$.

\section{Cocycle Rigidity and the Periodic Cycle Functional}
\label{pcf-sec}

A series of reductions outlined in \cite{kd2011} reduce the proof of Theorem \ref{main} to a problem of cocycle rigidity for the original action $\alpha_0$. The technique is outlined as follows.

First, a conjugacy up to the neutral foliation is constructed, provided by Theorem \ref{normal-stability}. Of course, the conjugacy is only guaranteed to be H\"older. Recall that a {\it cocycle taking values in $N$} over an action $\alpha : A \to \Diff(M)$ is a function $\beta : A \times M \to N$ satisfying:

\[ \beta(ab,x) = \beta(a,\alpha^b(x))\beta(b,x) \]

The following Proposition will help us reduce the problem of local rigidity to cocycle rigidity:

\begin{proposition}
\label{correction-cocycle}
If $\alpha_0$ is a PHWCF and $\bar{\alpha}$ is a $C^1$-small, $C^\infty$ perturbation, then there exists a H\"older homeomorphism $h_0 : M \to M$ such that $h_0\left(\mc N\right) = \mc N_0$, where $\mc N_0$ is the folitation of $M$ by the orbits of the Weyl chamber flow, and $\mc N$ is the neutral foliation for $\bar \alpha$. Furthermore, $\beta : A \times M \to N$ defined by:

\[ \beta(a,x) = \alpha^a(x)\alpha_0^a(x)^{-1} \]

is a H\"older cocycle over the action $\bar{\alpha}$ (or equivalently, $\alpha$), where $\alpha = h_0 \of \bar{\alpha} \of h_0^{-1}$.
\end{proposition}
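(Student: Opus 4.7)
The plan is to apply normal hyperbolicity theory to a single regular element to obtain $h_0$, and then exploit the fact that the neutral leaves are precisely orbits of the centralizer $N$ to define a genuinely $N$-valued cocycle. First I would fix a regular $a_0 \in A$, so that $\alpha_0^{a_0}$ acts normally hyperbolically with respect to the foliation $\mc N_0$ by $N$-orbits: the zero-eigenspace of $\ad(a_0)$ on $\Lie(G)$ is exactly $\Lie(N) = \Lie(N_A) \oplus \Lie(N_K)$, since every other root is nonzero on $a_0$. Applying Theorem \ref{normal-stability} to the perturbation $\bar{\alpha}^{a_0}$ yields a unique foliation $\mc N$ close to $\mc N_0$ and a Hölder homeomorphism $h_0 : M \to M$ with $h_0(\mc N) = \mc N_0$. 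Since $A$ is abelian, each $\bar{\alpha}^b$ commutes with $\bar{\alpha}^{a_0}$, so $\bar{\alpha}^b(\mc N)$ is again $\bar{\alpha}^{a_0}$-invariant and $C^1$-close to $\mc N_0$; by the uniqueness in Theorem \ref{normal-stability}, $\bar{\alpha}^b(\mc N) = \mc N$ for every $b$, so $\mc N$ is the neutral foliation of the full perturbed action.

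Setting $\alpha = h_0 \of \bar{\alpha} \of h_0^{-1}$, each $\alpha^a$ preserves $\mc N_0$ leafwise, so $\alpha^a(x)$ and $\alpha_0^a(x)$ lie on a common $N$-orbit. For $C^1$-small perturbations the two points remain close along the orbit, so there is a unique $\beta(a,x) \in N$ near the identity with $\alpha^a(x) = \beta(a,x) \cdot \alpha_0^a(x)$; this is what is meant by the expression $\alpha^a(x)\alpha_0^a(x)^{-1}$. Hölder regularity of $\beta$ in $x$ is inherited from $h_0$ (and from the local product structure on leaves of $\mc N_0$).

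The cocycle identity then reduces to the fact that $N = C_G(N_A)$ centralizes $A$. Writing $\alpha_0^a(x) = ax$ and applying $\alpha^a$ to $\alpha^b(x) = \beta(b,x)\cdot bx$,
\[
\beta(ab,x)\cdot abx \;=\; \alpha^{ab}(x) \;=\; \beta(a,\alpha^b(x))\cdot a\cdot \beta(b,x)\cdot bx.
\]
Since $\beta(b,x) \in N$ commutes with $a \in A \subset N_A$, we may slide $a$ past $\beta(b,x)$ and read off $\beta(ab,x) = \beta(a,\alpha^b(x))\,\beta(b,x)$.

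The main technical obstacle I anticipate is the well-definedness of $\beta$ as a genuine $N$-valued function rather than an element of the leaf space: the $N$-orbits in $M = G/\Gamma$ need not be embedded, so $\beta(a,x)$ must be selected in a neighborhood of $e \in N$ on which the orbit map is injective. For $a$ in any bounded subset of $A$ and a sufficiently $C^1$-small perturbation this is automatic by continuity and the normal form of the leaf conjugacy, and the cocycle identity just derived propagates the definition consistently to all of $A$.
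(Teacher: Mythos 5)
Your proof is correct and follows essentially the same route as the paper: apply the Hirsch--Pugh--Shub normal stability theorem for a single regular element, then verify the cocycle identity by commuting the $N$-valued increment past $a \in A$, using that $A$ is central in $N = C_G(N_A)$. You supply slightly more detail than the paper on two points it leaves implicit — the uniqueness argument showing $\mc N$ is preserved by all of $\bar{\alpha}$, and the well-definedness of $\beta$ as a genuine $N$-valued function near the identity — but the substance is the same.
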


\begin{proof}
The existence of the claimed H\"older homeomorphism, is just a particular case of Theorem \ref{normal-stability} applied to abelian actions. To see that $\beta$ is in fact a cocycle, observe that:

\[ \begin{array}{rcl}
\beta(a,\alpha^b(x))\beta(b,x) & = & \alpha^a(\alpha^b(x)) \alpha_0^a(\alpha^b(x))^{-1} \alpha^b(x) \alpha_0^b(x)^{-1} \\
 & = & \alpha^{ab}(x)(a\alpha^b(x))^{-1} \alpha^b(x)(bx)^{-1} \\
 & = & \alpha^{ab}(x) \alpha^b(x)^{-1}a^{-1}\left[\alpha^b(x)x^{-1}\right]b^{-1} \\
 & = & \alpha^{ab}(x)x^{-1}a^{-1}b^{-1} \\
 & = & \beta(ab,x)
\end{array} \]

We make heavy use of the fact that the orbit of $\alpha$ through $x$ is contained in $Nx$, and $A$ is central in $N$.

\end{proof}

The cocycle $\beta$ can be thought of as the ``defect'' of the homeomorphism $h_0$ from a conjugacy. Our goal is to correct this defect by showing that the cocycle is cohomologically trivial. Recall that a cocycle is {\it cohomologous to a constant} if there is a transfer map $H : M \to N$ and a homomorphism $s : A \to N$ such that:

\[ \beta(a,x) = H(\alpha^a(x))s(a)H(x)^{-1} \]

If we were to have this condition, we could define $h(x) = H(x)^{-1}x$ (which is a priori only continuous, not a homeomorphism). Then if $\iota : A \to N$ is defined by $\iota(a) = a\cdot s(a)$:

\[ 
\begin{array}{rcl}
\alpha_0^{\iota(a)} \of h(x) & = & \iota(a) H(x)^{-1} x \\
& = & s(a) H(x)^{-1} \alpha_0^{a}(x) \\
& = & s(a) H(x)^{-1}\beta(a,x)^{-1}\alpha^{a}(x) \\
 & = & H(\alpha^a(x))^{-1}\alpha^{a}(x) \\
 & = & h \of \alpha^{a}(x)
\end{array}
\]

\begin{proposition}[Proposisition 2.1, \cite{kd2011}]
\label{homeo}
The map $h$ is a homeomorphism
\end{proposition}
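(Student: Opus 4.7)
The plan is to show $h$ is bijective; since $h$ is a continuous self-map of the compact Hausdorff space $M$ (continuity being immediate from the continuity of $H$ and of the $N$-action on $G/\Gamma$), bijectivity will automatically upgrade to homeomorphism. A structural observation guides everything: because $H$ takes values in $N$, the image $h(x)=H(x)^{-1}x$ lies in the same $N$-orbit as $x$, so $h$ preserves the central foliation $\mc N_0$ of $M$ by $N$-orbits, sending each leaf into itself.

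For surjectivity, since the cocycle $\beta$ is $C^0$-small (it encodes the defect of the leaf conjugacy $h_0$ from being a conjugacy), we may arrange $H$ to be $C^0$-close to the identity in $N$, which makes $h$ uniformly $C^0$-close to $\id_M$. Combined with the leafwise structure, this yields surjectivity either by a degree argument on $M$ or, more directly, by a leafwise covering-space argument on each $N$-orbit, noting that every such orbit is a homogeneous space of $N$ and $h$ restricted to it is a small perturbation of the identity.

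For injectivity, suppose $h(x)=h(y)$. From the definition, $y=nx$ with $n=H(y)H(x)^{-1}\in N$, and applying the intertwining relation $\alpha_0^{\iota(a)}\circ h=h\circ \alpha^{a}$ derived just above the proposition statement yields $h(\alpha^{a}x)=h(\alpha^{a}y)$ for every $a\in A$. Hence the collapse set $K=\{(x,y)\in M\times M:h(x)=h(y)\}$ is closed, $\alpha$-invariant, and has fibers contained in single $N$-orbits. Using topological transitivity of $\alpha$ (which persists under sufficiently small $C^{1}$ perturbations of a topologically transitive action) together with the partial hyperbolicity of a regular element of $A$, one forces $K$ to equal the diagonal; equivalently, the only $N$-valued ``constant of integration'' compatible with the cocycle identity satisfied by $H$ is the identity.

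The main obstacle is precisely the injectivity step. Since $H$ is only H\"older continuous, no implicit function theorem on the $N$-orbits is available to invert $h$ locally; the statement is genuinely global and dynamical. The ingredients that make it work are the leafwise structure of $h$, the transitivity of the perturbed action, and the hyperbolicity of regular elements of $A$ transverse to $\mc N_{0}$, which together eliminate any nondiagonal coincidence. This is essentially the content of Proposition 2.1 of \cite{kd2011}, and I do not expect any additional ingredient beyond that reference to be required.
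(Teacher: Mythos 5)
Your surjectivity argument and the preliminary observation that $h$ preserves the central foliation agree with the paper: on the universal cover, $h$ leaves each leaf $Nx$ invariant and, being $C^0$-close to the identity, is surjective on each leaf. But the injectivity step is where you diverge, and where there is a genuine gap.

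You reduce to showing that the closed, $\alpha\times\alpha$-invariant collapse set $K=\{(x,y):h(x)=h(y)\}$ (with fibers inside single $N$-orbits) is the diagonal, and propose to obtain this from topological transitivity of the perturbed action together with partial hyperbolicity of a regular element. This does not close the argument. The fibers of $h$ live entirely inside the $N$-orbits, which are exactly the central/neutral directions: partial hyperbolicity gives no expansion or contraction there, so a regular element of $A$ does not ``see'' a nondiagonal pair $(x,y)\in K$. Moreover, topological transitivity of a partially hyperbolic action does not automatically persist under small $C^1$ perturbations (the paper establishes local transitivity of the Lyapunov \emph{foliations}, Proposition \ref{local-transitive}, which is a different statement), and even granted transitivity plus invariance, one only concludes that the projection of $K$ to each factor is all of $M$, not that $K$ is the diagonal. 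Nothing in your sketch prevents a continuous $N$-valued family of nontrivial coincidences along the central leaves.

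The ingredient the paper actually uses, and which is absent from your proposal, is the structure of the $\mc F$-holonomy group: by Corollary \ref{simple-transitive} (which in turn rests on the projections of cycle groups in Propositions \ref{u-cycles-proj}--\ref{holonomy-iso}), the group of $\mc F$-holonomies on a leaf $Nx$ is a Lie group acting simply transitively, hence has no small subgroups. If $h(x)=h(y)$, the unique holonomy $F$ with $F(x)=y$ satisfies $h\circ F=h$ (because $h$ is a leafwise semiconjugacy respecting Lyapunov foliations), so the cyclic group generated by $F$ keeps $h^{-1}(h(x))$ invariant, a set that is small when $h$ is $C^0$-close to the identity. A nontrivial $F$ would then generate a small nontrivial subgroup of a Lie group, which is impossible; hence $F=\id$ and $x=y$. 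This is also why the paper defers the proof of Proposition \ref{homeo} to the end of Section \ref{holonomy-sec}: it genuinely depends on the holonomy-group machinery, not merely on the dynamical properties you invoke.
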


The proof of Proposition \ref{homeo} requires another statement proved in \cite[Theorem 3]{kd2011} (see Propsosition \ref{simple-transitive}) which deals with the holonomy group induced by the Lyapunov foliations. A proof of this Proposition immediately follows the proof of Proposition \ref{simple-transitive}.


\begin{theorem}
\label{cocyle-rig}
Let $\alpha : A \to \Diff^{\infty}(M)$ be a sufficiently $C^1$-small perturbation of a generic restriction of a PHWCF. Then if $\beta : A \times M \to N$ is a sufficiently small, H\"older continuous cocycle over $\alpha$ with $N$ either abelian or compact, then $\beta$ is cohomologous to a constant via a continuous transfer map $H$
\end{theorem}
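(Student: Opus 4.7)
The approach is to construct the transfer map $H:M \to N$ via Livsic-type integration of the cocycle $\beta$ along Lyapunov paths. For a regular element $a \in A$, smallness of $\beta$ combined with contraction along the stable leaf $W_a^s$ under iteration of $\alpha^a$ allows one to solve the cohomology equation $\beta(a,\cdot) = H\of \alpha^a \cdot s(a) \cdot H^{-1}$ along $W_a^s$, for a suitable homomorphism $s:A\to N$ close to the trivial one. Intersecting with stable leaves of other Weyl chambers extends $H$ along every coarse Lyapunov leaf $W^r$. Concatenating along a Lyapunov path from a fixed basepoint $x_0$ then determines $H$ on the holonomy orbit of $x_0$, which by Proposition \ref{simple-transitive} is all of $M$.

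The central obstruction is path independence: two Lyapunov paths from $x_0$ to $x$ produce a closed Lyapunov cycle in $\mc C_{x_0}(\mc W)$, and $H$ is well-defined exactly when the periodic cycle functional $\mathrm{PCF}_\beta$ vanishes on every such cycle. I would decompose this into three problems. First, vanishing on stable cycles $\mc S_{x_0}(\mc W)$ follows by iterating a common contracting element and using H\"older continuity of $\mathrm{PCF}_\beta$; smallness of $\beta$ keeps the argument in a neighborhood of the identity in $N$ where contraction makes sense. Second, vanishing on non-contractible cycles factors through the natural projection $\mc C_{x_0}(\mc W) \to \pi_1(M) = \Gamma$ and is handled via Corollary \ref{lattice-finite}, after possibly absorbing a global term into $s$. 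Third, vanishing on contractible cycles modulo stable cycles is the essential algebraic difficulty.

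For the third step I would invoke the correspondence from Section \ref{holonomy-sec} between contractible cycles of the perturbed and homogeneous actions. This converts the problem into showing that every continuous homomorphism from the contractible cycle quotient $\mc C^c_{x_0}(\mc W)/\mc S_{x_0}(\mc W)$ of $\alpha_0$ into $N$ is trivial. The results of Section \ref{schur-sec} should identify this topological group with a quotient of the Schur multiplier $H_2(G,\Z)$ equipped with the topology constructed in the paper. I would first treat the case $G$ simple (Theorem \ref{simple-case}) by invoking classical Lie structure theory for central extensions of simple Lie groups to show that this topologized $H_2(G,\Z)$ admits no non-trivial continuous homomorphism to an abelian or compact group. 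The semisimple case follows by decomposing the cocycle across simple factors, with \gen{} ensuring that roots from distinct factors remain linearly independent on $\mf a$.

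The main obstacle is the algebraic input at the final step: the absence of non-trivial continuous homomorphisms from the topologized $H_2(G,\Z)$ to abelian or compact targets. This is exactly the innovation announced in the abstract, replacing the previously case-by-case Schur multiplier computations with a unified argument based on the natural topology on $H_2(G,\Z)$. Once this is secured, continuity of $H$ follows from the H\"older regularity of the Lyapunov foliations from Section \ref{lyap-sec}, and by construction $\beta$ is cohomologous to the constant homomorphism $s$ via $H$.
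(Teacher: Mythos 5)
Your proposal follows the paper's argument essentially step for step: you reduce to vanishing of the periodic cycle functional via Proposition \ref{pcf-suff}, split the cycle group into stable cycles (contraction), the $\pi_1(M)=\Gamma$ quotient (handled by Corollary \ref{lattice-finite} plus smallness of $\beta$), and contractible-mod-stable cycles transferred to $\alpha_0$ via the holonomy correspondence of Section \ref{holonomy-sec}, then kill the latter using the topologized Schur multiplier from Section \ref{schur-sec} together with the Levi-splitting argument (the paper's Key Lemma \ref{key-lemma} and Proposition \ref{central-splits}), treating the simple case first and decomposing the semisimple case across simple factors exactly as the paper does. This is the paper's proof in outline; the only cosmetic difference is that you frame the construction of $H$ as Livsic-type integration rather than directly citing Proposition \ref{pcf-suff}, and your phrase about ``absorbing a global term into $s$'' at the $\Gamma$ step is imprecise --- the paper instead argues that the induced $\Gamma \to N$ has uniformly bounded finite image, so shrinking the cocycle forces it to be trivial outright.
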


The main tool in proving Theorem \ref{cocyle-rig} is the {\it periodic cycle functional} for higher-rank actions. For convenience, we let $\mc W$ denote the family of Lyapunov foliations for a general action $\alpha$.

\begin{definition}
Let $\alpha : A \to \Diff^1(M)$ be an action of $A \cong \R^k \times \Z^l$, and $\beta : A \times M \to H$ be a H\"older cocycle over $\alpha$. The periodic cycle functional is a continuous homomorphism $P_\beta : \mc C_{x_0}(\mc W) \to H$ which sends

\[ P_\beta(x_0,x_1,\dots,x_N = x_0) = \prod_{i= 0}^{N-1} p_\beta(x_i,x_{i+1}) \]

where $p_\beta(x,y) = \lim_{n \to \infty} \beta(na,x)^{-1}\beta(na,y)$, where $x \in \mc F_r(y)$ and $r(a) < 0$. $P_\beta$ is the restriction of a continuous map $F_\beta$ from the space of paths, defined the same way (although, in general, the space of paths with fixed base point has no group structure, so $F_\beta$ cannot be considered a ``homomorphism'').
\end{definition}

In \cite{kd2005}, it is shown that this definition is independent of choice of $a$ assuming that $r(a) < 0$. Note also that the cycle group may change, as $\mc C_{x_0}(\mc W)$ changes with the basepoint.

\begin{proposition}[Proposition 4(1), Proposition 10(1),\cite{kd2005}]
\label{pcf-suff}
Let $\alpha : A \to \Diff^{\infty}(M)$ be an action such that the Lyapunov foliations are transitive. Then a H\"older cocycle $\beta$ with values in a Lie group with a bi-invariant metric is cohomologous to a constant via a continuous transfer map $H$ if and only if $P_\beta : \mc C_{x_0}(\mc W) \to N$ is trivial for some $x_0 \in M$
\end{proposition}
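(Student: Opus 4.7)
The plan is to prove both implications of the equivalence, drawing on transitivity and the structure of the periodic cycle functional in essentially symmetric ways.

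For the easy direction (necessity), suppose $\beta(a,x) = H(\alpha^a(x))s(a)H(x)^{-1}$. I would substitute this into the defining limit for $p_\beta(x,y)$, with $x \in \mc F_r(y)$ and $r(a) < 0$, to obtain
\[
\beta(na,x)^{-1}\beta(na,y) = H(x)\,s(na)^{-1}\,H(\alpha^{na}(x))^{-1}H(\alpha^{na}(y))\,s(na)\,H(y)^{-1}.
\]
Since $x$ and $y$ lie in a common stable leaf for $a$, their orbits collapse under $\alpha^{na}$, and continuity of $H$ forces the middle factor toward the identity. Bi-invariance of the metric on $N$ is precisely what allows the conjugating $s(na)$ on each side to be absorbed without expanding this error. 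In the limit, $p_\beta(x,y) = H(x)H(y)^{-1}$, and the product around any cycle telescopes to $e$.

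The substantive direction is sufficiency. Assuming $P_\beta$ vanishes on $\mc C_{x_0}(\mc W)$, I would define $H$ by path evaluation: fix the basepoint $x_0$, use transitivity of the Lyapunov foliations to pick, for each $x$, a Lyapunov path $\gamma$ from $x_0$ to $x$, and set $H(x) := F_\beta(\gamma)$. Well-definedness is exactly the hypothesis on $P_\beta$, since any two such paths differ by concatenation with a cycle. Continuity of $H$ should follow from continuity of $F_\beta$ on each stratum $\mc C^n_{x_0}(\mc W)$, together with the ability to perturb the final leg of $\gamma$ continuously with $x$ in a neighborhood — this is where one genuinely needs local, not just global, transitivity (and this is also what Proposition \ref{homeo} ultimately relies on).

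The final step is to produce $s : A \to N$ and verify the cohomology equation $\beta(a,x) = H(\alpha^a(x))s(a)H(x)^{-1}$. The idea is to compare $F_\beta(\alpha^a \gamma)$ with $F_\beta(\gamma)$: the action pushes a Lyapunov path based at $x_0$ to one based at $\alpha^a(x_0)$, and the cocycle identity for $\beta$ translates into a relation between $p_\beta$ at paired legs, yielding a correction that, by triviality of $P_\beta$, depends on $a$ alone. Setting $s(a)$ to be this correction (up to a normalization involving $\beta(a,x_0)$), the cocycle identity for $\beta$ then forces $s$ to be a homomorphism. The main obstacle I expect is precisely this last bookkeeping: showing that the correction is $x$-independent and genuinely multiplicative in $a$. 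Bi-invariance of the metric again plays a silent but essential role, both in controlling limits and in ensuring conjugation does not spoil the continuity estimates used to glue local path computations into a global transfer map.
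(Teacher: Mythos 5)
The paper does not actually prove this proposition; it is cited from \cite{kd2005} (Proposition 4(1), Proposition 10(1)), and the only content the paper adds is the remark immediately following, which observes that the hypothesis of \emph{local} transitivity in the cited reference can be weakened to transitivity, the continuous choice of paths being supplied by Lemma~\ref{section-exist} and the holonomy projections. Your reconstruction therefore fills in a proof the paper itself omits, and it is structurally sound; a few remarks are worth making.

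Your necessity argument is correct: the computation of $\beta(na,x)^{-1}\beta(na,y)$ under the coboundary hypothesis is exactly right, and you are right that bi-invariance of the metric is what permits absorbing the conjugation by $s(na)$ so that $p_\beta(x,y) = H(x)H(y)^{-1}$ and the cycle product telescopes to the identity.

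The sufficiency direction is the right idea but leaves the key computational input implicit. The engine of the argument is the equivariance identity
\[
p_\beta(\alpha^a x, \alpha^a y) \;=\; \beta(a,x)\, p_\beta(x,y)\, \beta(a,y)^{-1},
\]
which follows from the cocycle identity and the limit definition (again with bi-invariance controlling the conjugations). Telescoping along a Lyapunov path $\gamma$ from $x_0$ to $x$ gives $F_\beta(\alpha^a\gamma) = \beta(a,x_0)\,F_\beta(\gamma)\,\beta(a,x)^{-1}$. Composing $\alpha^a\gamma$ with a path from $x_0$ to $\alpha^a(x_0)$ (which exists by transitivity) and using triviality of $P_\beta$ to make $H(x) := F_\beta(\rho_x)$ well-defined, one obtains the cohomology equation with $s(a) = H(\alpha^a x_0)\beta(a,x_0)$ (up to a convention of replacing $H$ by $H^{-1}$), and the homomorphism property of $s$ is then a short direct verification from the cocycle identity. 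Your phrase ``yielding a correction that depends on $a$ alone'' is exactly this, but stating the equivariance of $p_\beta$ and the telescoping makes the step transparent rather than a matter of ``bookkeeping.''

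One small correction of emphasis: you write that continuity of $H$ is ``where one genuinely needs local, not just global, transitivity.'' This slightly undercuts the point the paper is making in the accompanying remark. The proposition is deliberately stated with the weaker hypothesis; what is actually needed is a continuous (local) selection of Lyapunov paths to nearby endpoints, which in the paper's setting comes from the section in Lemma~\ref{section-exist} rather than from assuming local transitivity a priori. In the abstract you are right that bare transitivity with no continuity of the selection is insufficient, but the mechanism here is the section lemma, not local transitivity as such.
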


\begin{remark}
In \cite{kd2005}, the Proposition as stated requires {\it local} transitivity of the Lyapunov foliations. In \cite{damjanovic07}, Remark 3.4, it is observed that this condition may be relaxed. This observation is particularly easy in our case: By Lemma \ref{section-exist}, to each point nearby $x$, we can associate continuously a Lyapunov path for the action $\alpha_0$, and through the holonomy projections, we can associate  Lyapunov paths for the action $\bar{\alpha}$ (which may or may not be very long!). Regardless of the length when realized as paths in the manifold, the paths themselves will have fixed combinatorics, and will vary continuously with the endpoint $x$.

We now recall that the transfer map is simply defined to be:

\[ H(x) = F_\beta(\rho_x) \]

where $\rho_x = (x_0,\dots,x_N)$ is a path from a fixed base point $x_0$ to $x_N = x$. This observation makes it obvious that transitivity is sufficient. 
\end{remark}

Cocycle rigidity over the unpertrubed flow $\alpha_0$ was shown in in the case of $SL(d,K)$ in \cite{kd2005}, using the periodic cycle functional and classical algebraic $K$-theory. By constructing projections between the cycle groups of the algebraic and perturbed actions, Katok and Damjanovic extended this to the perturbed action in \cite{kd2011}. We prove an analogous result in Propsoition \ref{holonomy-iso}. The action of holonomies is also the crucial tool in proving Proposition \ref{homeo}. In summary, proving Theorem \ref{cocyle-rig} will imply Theorem \ref{main}.

If $M$ is a compact manifold, and $\mc W$ is a family of foliations, let $\pi_x^{\mc W} : \mc P_x(\mc W) \to M$ denote the projection which sends a path to its endpoint (note that for a general family, this may not be surjective). The following will guarantee we still have transitivity of the Lyapunov foliations, and is a straightforward adaptation of the arguments for \cite[Proposition 1.4]{gps94} or \cite[Theorem 3.4]{pugh-shub97}:

\begin{lemma}
Suppose that $\mc F = \set{\mc F_i}$ is a family of foliations on a compact manifold $M$, such that for every $x \in M$, there is a continuous local section $\sigma_x : U_x \to \mc P_x(\mc F)$ for $\pi_x^{\mc F}$. Then if $\mc F'$ is a family of foliations sufficiently close to $\mc F$, $\mc F'$ also has a family of continuous section $\sigma_x' : U_x' \to \mc P_x(\mc F')$.
\end{lemma}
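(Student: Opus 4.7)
The plan is to imitate the construction of $\sigma_x$ in the perturbed family $\mc F'$, exploiting the $C^0$-persistence of foliation plaques, transversals, and their intersections under small perturbations. First I would reduce to fixed combinatorics: by continuity of $\sigma_x$ into the disjoint-union topology on $\mc P_x(\mc F) = \bigsqcup_n \mc P^n_x(\mc F)$, shrink $U_x$ so that every $\sigma_x(y) = (x = z_0(y), z_1(y), \ldots, z_N(y) = y)$ has common length $N$ and common combinatorics $\mathbf{i} = (i_1, \ldots, i_N)$, with each $z_k : U_x \to M$ continuous. Set $p_k := z_k(x)$. For each $k$, pick a small topological transversal $T_k$ through $p_k$ of complementary dimension to $\mc F_{i_k}$, and after further shrinking $U_x$ re-express the section inductively as $z_k(y) = \mc F_{i_k}^{\mathrm{loc}}(z_{k-1}(y)) \cap T_k(y)$, where $T_k(y)$ is a continuous family of transversals obtained by translating $T_k$ so as to pass through $z_k(y)$.

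Next, for $\mc F'$ sufficiently close to $\mc F$, by $C^0$-stability of foliation charts each $T_k(y)$ remains transverse to $\mc F'_{i_k}$, and the local holonomies $(\mc F'_{i_k})^{\mathrm{loc}}(\cdot) \cap T_k(y)$ are continuous and close to those of $\mc F_{i_k}$. I then define a candidate perturbed section $\tilde\sigma_x'(y) = (x, \tilde z_1'(y), \ldots, \tilde z_N'(y))$ by the same inductive formula with $\mc F'$ in place of $\mc F$. By construction $\tilde\sigma_x'(y) \in \mc P_x^N(\mc F')$, and its endpoint $\tau(y) := \tilde z_N'(y)$ is continuous in $y$, equals $y$ when $\mc F' = \mc F$, and is $C^0$-close to $y$ for small perturbations. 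The desired section is then $\sigma_x'(y') := \tilde\sigma_x'(\tau^{-1}(y'))$ on the neighborhood $U_x' := \tau(U_x)$, and uniformity across $x \in M$ follows from compactness.

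The main obstacle is the last step: showing that $\tau$ admits a continuous local right inverse near $x$, since we work in the purely topological (not smooth) category where the implicit function theorem is unavailable. I would handle this by invariance of domain, viewing $\tau$ as a $C^0$-small perturbation of $\id_{U_x}$ inside a foliation chart at $x$. Alternatively, one can enforce the endpoint condition automatically by using $\{y\}$ itself as the final transversal $T_N(y)$, reducing matters to a robust transverse intersection of $\mc F'_{i_{N-1}}$- and $\mc F'_{i_N}$-leaves that persists under small perturbations, in the spirit of the arguments in \cite{gps94, pugh-shub97}.
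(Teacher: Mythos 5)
The paper leaves this lemma unproved, stating only that it ``is a straightforward adaptation'' of arguments from \cite{gps94} and \cite{pugh-shub97}, so you are filling in a gap the paper glosses over. Your setup is sound and mirrors those references: reducing to fixed combinatorics via connectedness of $U_x$ and continuity into the disjoint-union topology, rebuilding the section inductively via transversals, and correctly isolating the crux, namely inverting the perturbed endpoint map $\tau = \pi^{\mc F'}_x \circ \tilde\sigma'_x$, which is only $C^0$-close to $\id_{U_x}$.

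The problem is that neither of your two proposed resolutions of that crux actually closes it. Invariance of domain requires injectivity, and a continuous map that is merely $C^0$-close to the identity on a disk can fold and fail to be injective; nothing in your construction rules this out. Your alternative of taking $T_N(y) = \set{y}$ is dimensionally ill-posed: $\set{y}$ is not a transversal of complementary dimension to $\mc F'_{i_N}$, so the intersection defining the last leg is generically empty. Reinterpreted, it would require $\tilde z'_{N-1}(y)$ to lie on the $\mc F'_{i_N}$-leaf of $y$, which is a backward constraint your forward induction does not enforce, and in general is an intersection of two leaves of non-complementary dimension. What the $C^0$-closeness of $\tau$ to $\id$ actually gives, via degree theory on a closed disk, is that $\tau$ is \emph{onto} a neighborhood of $x$; this is exactly the local transitivity that Proposition~\ref{local-transitive} extracts from the lemma, but surjectivity is strictly weaker than having a continuous section, and passing from a nonempty fiber $\tau^{-1}(y')$ to a continuous selection needs an additional idea. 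To make the statement as written go through, you should use the extra structure present in this setting: the unperturbed endpoint map is a submersion (Lemma~\ref{section-exist}), and the perturbed leaves are individually $C^1$ from normal hyperbolicity, so that one can differentiate the endpoint map in the final leaf-direction and invert there, or else run the degree argument and record only surjectivity, which is what the paper actually consumes downstream. As it stands there is a genuine gap between ``$\tau$ is $C^0$-close to $\id$'' and ``$\tau$ admits a continuous local right inverse.''
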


By compactness, we can cover the manifold $M$ with finitely many $U_x$ (and hence $U_x'$). Thus, if we fix one such basepoint $x_0$, we can reach any point of the manifold with a path of uniformly bounded length. In our case, we get the maps $\sigma_x$ from Lemma \ref{section-exist}, so:

\begin{proposition}
\label{local-transitive}
The Lyapunov foliations of $\bar{\alpha}$ are locally transitive
\end{proposition}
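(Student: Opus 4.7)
The plan is to verify the hypothesis of Lemma \ref{section-exist} for the homogeneous action $\alpha_0$; invoking the Lemma then transfers the existence of continuous local sections to the Lyapunov family $\mc W$ of $\bar\alpha$, which is exactly the statement of local transitivity.

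First I would produce continuous local sections $\sigma_x$ for the homogeneous Lyapunov foliations $\mc W_0$. Fix $x \in M = G/\Gamma$ and enumerate the roots $r_1, \ldots, r_m \in \Delta$. Because $\mf g = \Lie(N) \oplus \bigoplus_{r \in \Delta} \mf g^r$, the smooth map
\[ \Phi_x : (U_1, \ldots, U_m, V) \mapsto \exp(V) \exp(U_m) \cdots \exp(U_1) \cdot x, \qquad U_i \in \mf g^{r_i},\ V \in \Lie(N), \]
is a local diffeomorphism onto a neighborhood $U_x$ of $x$. Reading off its intermediate factors yields a smooth Lyapunov path
\[ \sigma_x(y) = \bigl( x,\, \exp(U_1) x,\, \exp(U_2)\exp(U_1) x,\, \ldots,\, y \bigr), \]
whose $i$th step lies in the coarse Lyapunov leaf $W^{r_i}$; the $V$-factor is absorbed along the neutral foliation, which is irrelevant to reaching nearby Lyapunov endpoints once one projects to a transversal. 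This gives the desired continuous section on $U_x$, and the same construction works at every base point, using the genuine higher-rank assumption \gen\ to ensure that the coarse Lyapunov foliations are precisely the ones coming from grouping proportional roots.

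Next I would invoke Lemma \ref{section-exist}. From Theorem \ref{normal-stability} and the intersection procedure described at the end of Section \ref{lyap-sec}, the perturbed Lyapunov foliations of $\bar\alpha$ are obtained from $\mc W_0$ by a H\"older leaf conjugacy, and in local foliation charts their plaques are a small $C^0$ perturbation of the homogeneous plaques. This places them in the regime of validity of Lemma \ref{section-exist}, which then produces continuous sections $\sigma_x' : U_x' \to \mc P_x(\mc W)$ of the endpoint map $\pi_x^{\mc W}$ for $\bar\alpha$, with the same combinatorial sequence $(r_1, \ldots, r_m)$ as above (only the lengths of the individual steps measured in $M$ are distorted by the conjugacy). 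By definition, this is local transitivity of $\mc W$.

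The step requiring the most care is the closeness assertion in the previous paragraph: the leaf conjugacy of Theorem \ref{normal-stability} is only H\"older globally, so one must argue that the tangent distributions of the perturbed coarse Lyapunov foliations are nonetheless $C^0$-close to those of $\mc W_0$ on small balls. This is where normal hyperbolicity is really used: the leaves of each perturbed coarse Lyapunov foliation are $C^1$ manifolds whose tangent spaces vary continuously and are uniformly $C^0$-close to the homogeneous ones, which is precisely what is needed to run the inverse-function-theorem deformation argument inside Lemma \ref{section-exist} and to conclude existence of $\sigma_x'$.
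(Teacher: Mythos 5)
There is a real gap in your construction of the local sections $\sigma_x$ for the homogeneous action. Your map $\Phi_x(U_1,\dots,U_m,V) = \exp(V)\exp(U_m)\cdots\exp(U_1)\cdot x$ does land diffeomorphically on a neighborhood of $x$, but the corresponding path $\bigl(x,\exp(U_1)x,\dots,\exp(U_m)\cdots\exp(U_1)x,\,y\bigr)$ is \emph{not} a Lyapunov path: its final leg, from $\exp(U_m)\cdots\exp(U_1)x$ to $y$, travels along the neutral foliation $\mc N$, which is not one of the coarse Lyapunov foliations $W^r$. If you drop the $\exp(V)$ factor to stay inside $\mc P_x(\mc W)$, then with one leg per root the endpoint map has image a submanifold of codimension $\dim N$, so you do not obtain a section over an open set $U_x \subset M$, which is what the transfer lemma and the compactness argument require. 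Your parenthetical ``the $V$-factor is absorbed\,\dots\,once one projects to a transversal'' does not rescue this: local transitivity as used later (e.g.\ in the proof of Proposition \ref{u-cycles-proj}) needs the endpoint map $\pi_x^{\mc W}$ to be open onto a full neighborhood, including motion in the $N$-direction, and that openness cannot be manufactured by passing to a transversal.

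What the paper does instead, in Lemma \ref{section-exist} of Section \ref{schur-sec}, is allow \emph{longer} paths: for $n$ large the multiplication $\mc P_n(\mc U) = \bigcup_{(r_1,\dots,r_n)}U_{r_1}\times\cdots\times U_{r_n}\to G$ is surjective (the root subgroups generate $G$, and in particular repeated commutators of the $U_r$ reach the Cartan direction), and it is smooth and homogeneous, hence a submersion, hence admits smooth local sections. This is exactly the mechanism for crossing the neutral direction that your fixed-length construction cannot reproduce. Once those sections exist, the paper applies the unnamed lemma stated just above Proposition \ref{local-transitive} (adapted from \cite{gps94}, \cite{pugh-shub97}) to transfer them to the perturbed family and then uses compactness to get uniform bounds. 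A secondary point: you cite Lemma \ref{section-exist} as if it were this transfer lemma; in the paper Lemma \ref{section-exist} is the existence statement for the \emph{homogeneous} sections, and the perturbative transfer is the separate, unnamed lemma. So the overall scaffolding of your argument matches the paper's, but the step producing $\sigma_x$ for $\alpha_0$ is where the work is, and as written it produces paths that leave the Lyapunov family.
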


One of the main ingredients in proving the vanishing for the algebraic action is the observation:

\begin{lemma}
\label{stable-vanishing}
$\mc S_{x_0}(\mc W) \subset \ker P_\beta$ for every H\"older cocycle $\beta$
\end{lemma}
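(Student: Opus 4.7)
The plan is to evaluate $P_\beta$ directly on a stable cycle and then invoke normality of $\ker P_\beta$ to pass to the full normal closure. Let $c = (x_0, x_1, \ldots, x_N = x_0)$ be a stable cycle witnessed by $a \in A$ with $r_i(a) < 0$ for all $i = 0, \ldots, N-1$. The decisive observation is that this single $a$ simultaneously contracts every edge of the path. Since $p_\beta(x,y)$ was shown in \cite{kd2005} to be independent of the choice of contracting element (any $a$ with $r(a) < 0$), I may use the same $a$ to compute every factor:
\[ p_\beta(x_i, x_{i+1}) = \lim_{n \to \infty} \beta(na, x_i)^{-1}\beta(na, x_{i+1}). \]

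Next, I would pull the limit outside the finite product. Continuity of multiplication in the target group, together with existence of each of the $N$ individual limits, gives
\[ P_\beta(c) = \lim_{n \to \infty} \prod_{i=0}^{N-1} \beta(na, x_i)^{-1} \beta(na, x_{i+1}). \]
The product then telescopes in the target group: adjacent factors collapse via $\beta(na, x_i)\beta(na, x_i)^{-1} = e$, leaving $\beta(na, x_0)^{-1}\beta(na, x_N)$, which is the identity because $x_N = x_0$. Hence $P_\beta(c) = e$ on every stable cycle.

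Finally, since $P_\beta : \mc C_{x_0}(\mc W) \to H$ is a (continuous) group homomorphism, $\ker P_\beta$ is a normal subgroup of $\mc C_{x_0}(\mc W)$. The subgroup generated by stable cycles therefore lies in $\ker P_\beta$, and so does its normal closure, yielding $\mc S_{x_0}(\mc W) \subset \ker P_\beta$.

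The argument is essentially formal once the common $a$ is fixed, and there is no major obstacle; the only point of substance is the telescoping, which relies crucially on the defining property of a stable cycle (a single $a$ works for every edge). The necessary ingredient that enables us to make this uniform choice is the $a$-independence of $p_\beta$ borrowed from \cite{kd2005}.
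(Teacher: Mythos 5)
Your proposal is correct and follows essentially the same route as the paper's own proof: fix a single contracting $a$ for the whole stable cycle, pull the limit outside the finite product, telescope, and conclude $P_\beta(c)=e$. You are slightly more explicit than the paper in invoking the $a$-independence of $p_\beta$ from \cite{kd2005} and in noting that normality of $\ker P_\beta$ extends the conclusion to the normal closure $\mathcal S_{x_0}(\mathcal W)$, but these are small expository differences, not a different argument.
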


\begin{proof}
This is clear from the definition, since if $\sigma = (x_1,x_2,\dots,x_n)$ is a stable cycle, there exists a uniform $a \in A$ so that when $x_{i+1} \in W^r(x_i)$, $r(a) < 0$. But then:

\[ 
\begin{array}{rcl}
P_\beta(\sigma) & = & \displaystyle\prod_{i=0}^{n-1} p_\beta(x_i,x_{i+1}) \\
 & = & \displaystyle \lim_{n\to \infty} \displaystyle \prod_{i=0}^{n-1} \beta(ka,x_i)^{-1}\beta(ka,x_{i+1}) \\
 & = & \displaystyle \lim_{k \to \infty} \beta(ka,x_0)^{-1}\beta(ka,x_n) \\
 & = & e
 \end{array} \]
\end{proof}

Lemma \ref{stable-vanishing} allows us to consider $P_\beta$ as a homomorphism from $C_{x_0}(\mc W) / S_{x_0}(\mc W)$. We will show that it is trivial on $C^c_{x_0}(\mc W) / S_{x_0}(\mc W)$. It will prove vanishing of the periodic cycle functional, provided the following proposition and the Margulis normal subgroup theorem, since the centralizer of the Cartan always splits as a product of an abelian group and a compact group $N \cong N_A \times N_K$:

\begin{proposition}
\label{lattice-rig}
Fix a cocompact, irreducible lattice $\Gamma$ of a semisimple, genuinely higher-rank Lie group $G$, and a compact group Lie group $N_K$. Then there is a constant $C(\Gamma,N_K)$ such that if $\varphi : \Gamma \to N_K$ is a homomorphism, its image has order less than $C(\Gamma,N_K)$.
\end{proposition}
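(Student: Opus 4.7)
The plan is to combine Margulis's normal subgroup theorem with Jordan's theorem on finite subgroups of compact Lie groups, using Kazhdan's property (T) of $\Gamma$ and its finite-index subgroups to produce the final bound. I would first show that $\varphi(\Gamma)$ is finite for every $\varphi$, and then establish a uniform bound on its order.

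For the finiteness of the image, the kernel $\ker\varphi$ is normal in $\Gamma$, so by the Margulis normal subgroup theorem it is either of finite index or finite (and contained in $Z(\Gamma)$). In the finite-index case $\varphi(\Gamma)$ is immediately finite. Otherwise, the closure $H := \overline{\varphi(\Gamma)}$ would be a positive-dimensional compact Lie subgroup of $N_K$ in which $\varphi$ has dense image. I would rule this case out using Margulis's superrigidity theorem for compact targets: the homomorphism $\Gamma \to H$ would extend to a continuous homomorphism $G \to H$, but since $G$ has no compact simple factors every such homomorphism is trivial, contradicting the density of the image. Thus $F := \varphi(\Gamma)$ is finite.

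To bound $|F|$ uniformly, apply Jordan's theorem to $N_K$: there is a constant $J = J(N_K)$, depending only on $N_K$, such that every finite subgroup of $N_K$ contains an abelian normal subgroup of index at most $J$. Pick such $A \subset F$ and set $\Gamma' := \varphi^{-1}(A)$, a subgroup of $\Gamma$ of index at most $J$. Since $\Gamma$ is finitely generated, it has only finitely many subgroups of index at most $J$; call these $\Gamma_1,\dots,\Gamma_m$. Each $\Gamma_i$ is itself an irreducible cocompact lattice in the higher-rank semisimple $G$, hence inherits property (T), so the abelianization $\Gamma_i^{ab}$ is finite. Because $\varphi|_{\Gamma'}$ has abelian image, it factors through $(\Gamma')^{ab}$, giving $|A| = |\varphi(\Gamma')| \le |(\Gamma')^{ab}| \le M$ where $M := \max_i |\Gamma_i^{ab}|$. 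Combining yields $|\varphi(\Gamma)| \le J \cdot M$, so one may take $C(\Gamma,N_K) := J(N_K) \cdot M$.

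The main obstacle is the first step: eliminating the finite-kernel alternative in the NST. This is precisely where the genuine higher-rank hypothesis is essential, either via Margulis superrigidity for compact targets or by arguing directly that no virtually injective quotient of $\Gamma$ can have infinite image in a compact Lie group. The remaining bookkeeping is combinatorial, relying only on the hereditary nature of property (T) and the finite generation of $\Gamma$.
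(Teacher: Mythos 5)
Your proof is correct and follows the same overall strategy as the paper's: Margulis's normal subgroup theorem and superrigidity for compact targets establish finiteness of the image, and Jordan's theorem together with finite generation of $\Gamma$ furnish the uniform bound. You are in fact more careful than the paper at the final step. The paper, after applying Jordan's theorem to produce the normal abelian $\Lambda \le \varphi(\Gamma)$ of index $\le C'$, only observes that there are finitely many candidates for $\Lambda_0 = \varphi^{-1}(\Lambda)$ among normal subgroups of $\Gamma$ of index $\le C'$; it leaves implicit why $|\Lambda| = |\varphi(\Lambda_0)|$ is itself uniformly bounded. You supply the missing ingredient by explicitly invoking property (T) of each finite-index subgroup $\Gamma_i$ to bound its abelianization, hence the order of the abelian image $A$. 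Without that step, a bound on $[\Gamma : \Lambda_0]$ alone does not control $|\varphi(\Gamma)| = [\varphi(\Gamma):\Lambda]\cdot|\Lambda|$, so your write-up closes a small gap in the paper's argument while using the same ideas.
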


\begin{proof}
Any compact group $N_K$ can be represented as an algebraic matrix group. Let $\overline{\; \cdot \;}$ denote the standard topological closure and $\overline{\; \cdot\; }^z$ denote the Zariski closure. The Margulis Normal Subgroup theorem guarantees that $\overline{\varphi(\Gamma)}^z$ is a semisimple algebraic group defined over $\Q$. Assume that $\varphi(\Gamma)$ is not finite. It is a noncompact subgroup of $\overline{\varphi(\Gamma)}^z$, since if it were closed, it would be Lie and hence uncountable. Since the center  of $N_K$ is finite, we can assume that $\varphi$ extends to a homomorphism $\overline{\varphi} : \Gamma \to N_K / Z(N_K)$ (by quotienting in $\Gamma$ by $\varphi^{-1}(Z(N_K))$. Furthermore, the Margulis superrigidity theorem tells us that $\overline{\varphi}$ is the restriction of some homomorphism $\widetilde{\varphi} : G \to N_K / Z(N_K)$.

Since we assumed that the image of $\varphi$ is infinite, the kernel of $\widetilde{\varphi}$ cannot be all of $G$. Since $G$ is semisimple, $\widehat{G} = G / \ker \widetilde{\varphi}$ is again a semisimple Lie group. But this implies that $\Lie(\widehat{G}) = \Lie(N_K)$, which cannot be true, because $N_K$ is compact and $G$ has no compact factors.

Thus the image is always finite. To see that it is bounded by a fixed number, note that $N_K$ is a matrix group and so Jordan's theorem tells us that there is a fixed constant $C'$ such that any finite subgroup contains a normal abelian subgroup whose index is at most $C'$. Let $\Lambda$ be such a subgroup of $\varphi(\Gamma)$. Then $\Lambda_0 = \varphi^{-1}(\Lambda)$ is finite index in $\Gamma$, and the index is bounded by a fixed constant $C'$. But then $\Lambda_0$ is exactly the kernel of the homomorphism $\Gamma \to \Gamma / \Lambda_0$, so we only need to verify that there are finitely many such projections onto groups of order $\le C'$. This follows from finite generation of the lattice.

\end{proof}

\begin{corollary}
\label{lattice-finite}
Any homomorphism $\varphi :\Gamma \to N$ has finite image, whose order is bounded by a fixed constant
\end{corollary}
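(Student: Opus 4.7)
The plan is to use the product decomposition $N = N_A \times N_K$ stated in the introduction and reduce to Proposition \ref{lattice-rig}. Write $\varphi = (\varphi_A, \varphi_K)$ where $\varphi_A : \Gamma \to N_A \cong \R^k$ and $\varphi_K : \Gamma \to N_K$ are the component homomorphisms.

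First I would handle the abelian component $\varphi_A$. Since $\Gamma$ is an irreducible cocompact lattice in a genuinely higher-rank semisimple Lie group, the Margulis normal subgroup theorem forces the commutator subgroup $[\Gamma,\Gamma]$ to either be finite or of finite index; standard arguments (or equivalently property (T)) rule out the first possibility, so $\Gamma^{\mathrm{ab}} = \Gamma/[\Gamma,\Gamma]$ is finite. Any homomorphism from $\Gamma$ to the torsion-free group $\R^k$ must factor through $\Gamma^{\mathrm{ab}}$, and the only finite subgroup of $\R^k$ is trivial. Therefore $\varphi_A \equiv 0$, and $\varphi$ takes values entirely in the compact factor $N_K$.

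Then $\varphi$ is a homomorphism $\Gamma \to N_K$, and Proposition \ref{lattice-rig} applies directly to produce a bound $C(\Gamma,N_K)$ on the order of the image, depending only on $\Gamma$ and $N_K$. Since $N_K$ is fixed as the compact part of the centralizer of the Cartan in $G$, the constant depends only on the ambient data $(G,\Gamma)$, as claimed.

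There is no real obstacle: the only subtle point is confirming that the abelianization of $\Gamma$ is finite, which is a well-known consequence of Margulis's normal subgroup theorem in the genuinely higher-rank setting — exactly the standing hypothesis on $G$ throughout the paper. Once that is invoked, the corollary is a one-line consequence of Proposition \ref{lattice-rig}.
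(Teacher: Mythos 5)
Your proof is correct and is exactly the implicit argument the paper intends: it states the corollary without proof immediately after Proposition \ref{lattice-rig}, and the remark that follows even alludes to the property (T) ingredient you use to kill the $N_A$-component. Decomposing along $N = N_A \times N_K$, annihilating the $\R^k$-valued part via finiteness of $\Gamma^{\mathrm{ab}}$, and applying Proposition \ref{lattice-rig} to the $N_K$-valued part is the intended route.
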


\begin{remark}
Such a Proposition was not needed in \cite{kd2011} or \cite{damjanovic07}, since in the split case $N_K = \set{e}$. Furthermore in \cite{damjanovic07}, the Corollary follows trivially, since $\pi_1(SL(d,\R))$ is finite, and any lattice in $SL(d,\R)$ has property (T).

Indeed, by Remark 6.20(1) in \cite{margulis91}, property (T) persists for lattices in the universal cover of algebraic groups, and hence generally for semisimple groups. See also Remark 3.8 in \cite{zwang-2}.
\end{remark}

\section{The Group of Cycles and Central Extensions}
\label{schur-sec}

In the case of Weyl chamber flows, the Lyapunov foliations and cycle group carry an important algebraic structure. Recall that a group is {\it perfect} if $G = [G,G]$. If $G$ is an abstract, perfect group, it has a {\it universal central extension} $1 \to K \to\widetilde G \to G \to 1$. It is characterized by the property that if $1 \to Z \to H \to G \to 1$ is any other central extension, then there exists a unique homomorphism $\Phi : \widetilde{G} \to H$ which extends the identity on $G$. The kernel of the projection coincides with the second group homology of $G$, $H_2(G,\Z)$ often called the {\it Schur multiplier}. It should be noted that for the needs of this paper, the second homology group for $G$ is taken as an abstract group and does not reference the topology of $G$.

Our aim in this section is to put appropriate algebraic and topological structures on the cycle spaces. Fix the basepoint $e$, so that $\mc P(\mc U) = \mc P_e(\mc U)$ is the set of Lyapunov paths for the action of $\alpha_0$ on $G$. Put a topology on $\mc P(\mc U)$ in a similar fashion to the cycle group: we write $\mc P(\mc U) = \sqcup_{n \in \N} \mc P_n(\mc U)$, where $\mc P_n(\mc U)$ are the paths of combinatorial length $n$.

\begin{proposition}
\label{genericity}
If $A \subset N_A \subset G$ satisfies the genericity condition \gen, then $\mc P(\mc U)$ carries a group structure. Furthermore, the structure makes it isomorphic to the free product of the groups $U_r = \exp\left(\bigoplus_{t\in \R_+} \mf g_{tr}\right)$, where $r \in \Delta_{N_A}$, where $\Delta_{N_A}$ are the roots of the split Cartan subalgebra $\log(N_A)$
\end{proposition}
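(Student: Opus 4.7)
The plan is to parameterize Lyapunov paths by words in the root subgroups $U_r$ and show that this parameterization directly implements the free product structure. Under \gen, each Lyapunov exponent $r$ for the $A$-action corresponds to a single positive-scaling class of roots of $\mf g$ with respect to $\mf h$, so $\mf u_r := \bigoplus_{t \in \R_+} \mf g_{tr}$ is a Lie subalgebra (the bracket of $\mf g_{sr}$ with $\mf g_{tr}$ preserves positive multiples) and nilpotent; hence $U_r = \exp(\mf u_r)$ is a closed connected Lie subgroup of $G$. A short computation using right-invariant vector fields, on which $L_a$ acts through $\Ad(a)$, identifies the Lyapunov leaf $W^r(g) = U_r \cdot g$. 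Consequently, each Lyapunov path $(e, x_1, \ldots, x_n)$ with $x_{i+1} \in W^{r_i}(x_i)$ is encoded uniquely by a word $(u_0, \ldots, u_{n-1})$ with $u_i \in U_{r_i}$, via $x_{i+1} = u_i x_i$.

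Next I would put a group structure on $\mc P(\mc U)$ by translated concatenation: given a path $p$ ending at $x_n$ and a path $q$ based at $e$, right-translate $q$ by $x_n$ (which sends each leaf $W^r(\cdot) = U_r \cdot (\cdot)$ to another such leaf) and concatenate with $p$. In the $u$-parameterization this operation is simply juxtaposition of words. Modulo the two natural identifications, namely (i) discard a step equal to the identity of its $U_r$, and (ii) replace two consecutive steps in the same $U_r$ by their product, the trivial path $(e)$ serves as identity, and the reverse-translate of $p$, with word $(u_{n-1}^{-1}, \ldots, u_0^{-1})$, is its inverse: concatenation with $p$ telescopes by iteratively applying (ii) and then (i).

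Finally I define $\Phi : \mc P(\mc U) \to \freep_{r \in \Delta_{N_A}} U_r$ sending $(u_0, \ldots, u_{n-1})$ to the class of $u_0 \ast u_1 \ast \cdots \ast u_{n-1}$. This is a homomorphism by construction, surjective (every reduced word gives a reduced Lyapunov path), and injective because the equivalences (i), (ii) on $\mc P(\mc U)$ coincide exactly with the reduction rules for a free product. The one nontrivial algebraic ingredient for injectivity is that $U_r \cap U_{r'} = \{e\}$ for distinct coarse Lyapunov classes $r \ne r'$, which follows from linear independence of root spaces of $\mf g$. The whole argument rests on \gen, which is also the main obstacle: without it, two distinct roots of $\mf g$ could descend to positively proportional functionals on $\mf a$, so a single coarse Lyapunov class would pool root spaces from several root classes and $U_r$ as defined would no longer be the nilpotent subgroup carrying the Lyapunov leaf. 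This would break both the unambiguous labeling of path steps by roots and the absence of hidden identifications in the target free product; once genericity is correctly invoked at each stage, the remaining verification is combinatorial.
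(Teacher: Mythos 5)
Your proposal takes essentially the same route as the paper: \gen\ is used to identify the coarse Lyapunov foliations of $A$ with those of $N_A$, the group law is right-translated concatenation, and paths are encoded as words in $\bigcup_r U_r$, yielding the free product. You make explicit the reduction rules and the fact that $U_r \cap U_{r'} = \{e\}$ for distinct coarse classes (so the word encoding is unambiguous), points the paper leaves implicit, but the core argument is the same.
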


\begin{proof}
We first show that the $U_r$ are exactly the coarse Lyapunov foliations for the action. They are the coarse foliations for the full action of $N_A$, and the assumption \gen \, guarantees that if $r_1|_{\log(A)} = \lambda r_2|_{\log(A)}$, then $r_1 = \lambda r_2$. Thus, they share the same projectivized roots and hence the same coarse Lyapunov foliations.

Unlike the cycle group, we cannot use concatenation to define multiplication, so we must take some care. If $\rho_1 = (e,g_1,\dots,g_n)$ is one path, and $\rho_2 = (e,h_1,\dots,h_k)$ is another, then define the product:

\[ \rho_1 * \rho_2 = (e,h_1,\dots,h_k,g_1h_k,g_2h_k,\dots,g_nh_k) \]

Thus an element of $\mc P(\mc U)$ corresponds to a sequence:

\[ \left(e, g_1^{(r_1)}, g_2^{(r_2)}g_1^{(r_1)}, \dots, g_n^{(r_n)}g_{n-1}^{(r_{n-1})}\dots g_1^{(r_1)} \right) \]

where $g_i^{(r_i)} \in U_{r_i}$. We allow cancellation of reverse paths, so an element of $\mc P(\mc U)$ can thus be identified with a formal product of elements of $\bigcup_{r \in \Delta} U_r$. That is, we have shown that $\mc P$ is the free product of the groups $\bigcup_{r \in \Delta} U_r$. 

\end{proof}

We make use of the fact that the Lyapunov foliations for the unperturbed action are extremely well-behaved. Note that this structure makes $\mc P(\mc U)$ a topological group with a canonical projection $\pi_e^{\mc U} : \mc P(\mc U) \to G$ which sends a path to its endpoint. The cycle group can then be characterized as the closed subgroup $\mc C^G(\mc U) = \ker \pi_e^{\mc U}$. Recall that $\mc C^G(\mc U)$ is the group of cycles which lift to cycles in $G$. If $G$ is simply connected, then these are exactly the contractible cycles.

\begin{proposition}
\label{cycles-are-schur}
The cycle group $\mc C^G(\mc U)/\mc S(\mc U)$ is a factor the Schur multiplier of $G$, considered without topology, and the group $\mc P(\mc U) / \mc S(\mc U)$ is a factor of the universal central extension of $G$.
\end{proposition}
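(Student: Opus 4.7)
The plan is to exhibit an explicit surjection from the universal central extension $\widetilde{G}$ of $G$ onto $\mc P(\mc U)/\mc S(\mc U)$, and then extract the statement about the Schur multiplier by chasing kernels. The two key inputs will be a Steinberg-type presentation of $\widetilde{G}$ (Matsumoto's theorem, which applies since each simple factor of $G$ has real rank at least $2$) and the genericity assumption \gen, which guarantees that the defining Chevalley commutator relations of $\widetilde{G}$ land in $\mc S(\mc U)$.

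First, since the root subgroups $\{U_r\}_{r \in \Delta_{N_A}}$ generate the connected semisimple group $G$, the endpoint map $\pi_e^{\mc U}$ is surjective, giving a short exact sequence $1 \to \mc C^G(\mc U) \to \mc P(\mc U) \to G \to 1$. By Matsumoto's theorem, the universal central extension admits a presentation $\widetilde{G} = (*_r U_r)/N$, where $N$ is the normal closure of the Chevalley commutator relations
\[ [x_r(t), x_{r'}(u)] \prod_{\substack{i,j > 0 \\ ir+jr' \in \Delta}} x_{ir+jr'}(c_{ij}(t,u))^{-1} \]
for each pair of linearly independent roots $r, r'$. Viewed in $\mc P(\mc U) = *_r U_r$, each such relation is a Lyapunov cycle. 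The main observation is that these cycles are stable: the genericity condition \gen ensures that for non-proportional $r, r'$ the restrictions $r|_{\mf a}, r'|_{\mf a}$ remain non-proportional, so one can choose $a \in \mf a$ with $r(a) < 0$ and $r'(a) < 0$; then automatically $(ir+jr')(a) = ir(a) + jr'(a) < 0$ for every $i, j > 0$. Therefore $N \subseteq \mc S(\mc U)$.

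From $N \subseteq \mc S(\mc U)$ one obtains a surjection
\[ \widetilde{G} = (*_r U_r)/N \twoheadrightarrow (*_r U_r)/\mc S(\mc U) = \mc P(\mc U)/\mc S(\mc U) \]
compatible with the projections to $G$. The induced map on kernels is a surjection $H_2(G,\Z) = \ker(\widetilde{G} \to G) \twoheadrightarrow \ker(\mc P(\mc U)/\mc S(\mc U) \to G) = \mc C^G(\mc U)/\mc S(\mc U)$, proving both statements. As an automatic byproduct, $\mc C^G(\mc U)/\mc S(\mc U)$ is central in $\mc P(\mc U)/\mc S(\mc U)$, being the image of the central subgroup $H_2(G,\Z) \subset \widetilde{G}$.

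The main obstacle is the careful application of Matsumoto's theorem in the setting of real Lie groups treated as abstract groups, and verifying that the combinatorial relation inside $\mc P(\mc U)$ really is the Chevalley commutator relation. Both the higher-rank hypothesis (for the Steinberg presentation to generate all of $\ker(\widetilde G \to G)$) and the genericity condition \gen (to secure, for \emph{every} pair of non-proportional roots, a common strict negativity element in $A$) are used essentially here; weakening either would require an auxiliary argument to absorb the missing Weyl-type relations.
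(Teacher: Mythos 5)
Your proof is correct and takes essentially the same route as the paper's: a Steinberg-type presentation of $\widetilde{G}$ surjects onto $\mc P(\mc U)/\mc S(\mc U)$, once one observes --- as you do explicitly, using \gen\ to produce a common contracting element of $A$ for each pair of non-proportional roots --- that the defining commutator relations are stable cycles and hence lie in $\mc S(\mc U)$. The obstacle you flag about applying Matsumoto's theorem to real Lie groups is exactly why the paper cites Deodhar's version (Theorem 1.9 of \cite{deodhar78}) instead, which is valid for arbitrary isotropic simple groups of relative rank at least $2$ with commutator relations taken over the restricted root system (so the coarse subgroups $U_r$ may be nonabelian); the paper also handles the passage from the algebraic group to its simply connected cover $G'$, a step your argument does not explicitly address.
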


\begin{proof}
By Proposition \ref{genericity}, the group of Lyapunov paths carries a canonical group structure, which coincides with the group structure on the cycles. This will fit into a central extension of $G$:

\[ 1 \to \mc C^G(\mc U) / \mc S(\mc U) \to \mc P(\mc U) / \mc S(\mc U) \to G \to 1 \]


We wish to factor $\mc P(\mc U)$ by $\mc S(\mc U)$. Instead, let us identify a subgroup of $\mc S(\mc U)$ which is easy to work with. Namely, consider the following elements of $\mc S(\mc U)$, which we assume are chosen so that they project to the identity:

\vspace{.25cm}

\begin{enumerate}[(i)]
\item ${\left[g_1^{(r_1)},g_2^{(r_2)}\right]}g^{(r_1+r_2)} \hspace{.5cm}  r_1,r_2,r_1+r_2 \in \Delta$ \vspace{.1cm}
\item ${\left[g_1^{(r)},g_2^{(r)}\right]}$ \vspace{.1cm}
\item ${\left[g_1^{(r_1)},g_2^{(r_2)}\right]} \hspace{1cm} r_1,r_2 \in \Delta, r_1 + r_2 \not\in \Delta$
\end{enumerate}

\vspace{.25cm}

These are formal products, which when multiplied in $G$ give $e$. We will call these elements the {\it commutator relations} on the Lyapunov generators, and denote the normal closure of the group generated by the commutator relations as $\mc S_{\mbox{\tiny comm}}$. Since $\mc S_{\mbox{\tiny comm}} \subset \mc S(\mc U)$, there is a natural projection $\mc P(\mc U) / \mc S_{\mbox{\tiny comm}} \to \mc P(\mc U) / \mc S(\mc U)$.

The group $\mc P(\mc U) / \mc S_{\mbox{\tiny comm}}$ appears in a classical paper of Deodhar, and is one of the fundamental Lemmas that give us an important structure theory:

\begin{lemma}[Theorem 1.9,\cite{deodhar78}]
\label{univ-centr}
If $G$ is a simple algebraic group and has $\R$-rank of $G$ at least 2, then $\mc P(\mc U) / \mc S_{\mbox{\tiny comm}}$ is the universal central extension of $G$, and $\mc C^G(\mc U) / \mc S_{\mbox{\tiny comm}}$ is the Schur multiplier of $G$
\end{lemma}

In fact, the assumption of $G$ being algebraic is unnecessary in Lemma \ref{univ-centr}. We note also if $G'$ is the universal cover of an algebraic group $G$, it differs from the algebraic version by exactly $\pi_1(G)$. That is, $G'$ is a perfect central extension of $G$, so there is a map $\widetilde{G} \to G'$ uniquely extending the identity on $G$. Since it is unique, it must be the map which associates to a path its fixed-endpoint homotopy class. Its kernel is exactly the contractible cycles. The universal property of $\widetilde{G}$ is easily checked for $G'$ as well (since any central extension of $G'$ is also a central extension of $G$), so $\widetilde{G}$ is also a universal central extension of $G'$. Hence, $H_2(G',\Z) \cong \mc C^c(\mc U) / \mc S_{\text{comm}}$.
\end{proof}

\begin{proposition}
\label{top-cent-ext}
If $G$ is a simple Lie group with a PHWCF, then the group $\mc P(\mc U) / \mc S(\mc U)$ is a perfect central extension of $G$, such that the following is a short exact sequence of topological groups:

\[ 1 \to \mc C^G(\mc U) / \mc S(\mc U) \to \mc P(\mc U) / \mc S(\mc U) \to G \to 1 \]

\end{proposition}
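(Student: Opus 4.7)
My plan is to separate the algebraic content from the topological content, since the former is essentially a consequence of the preceding results in this section.

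For the algebraic part: Proposition \ref{cycles-are-schur} already yields that the endpoint map $\pi_e^{\mc U}$ descends to a surjective homomorphism $\mc P(\mc U)/\mc S(\mc U) \twoheadrightarrow G$ with kernel $\mc C^G(\mc U)/\mc S(\mc U)$, and the commutator relations in $\mc S_{\mbox{\tiny comm}} \subseteq \mc S(\mc U)$ force this kernel to be central. For perfectness, I would invoke Lemma \ref{univ-centr}, which identifies $\widetilde G := \mc P(\mc U)/\mc S_{\mbox{\tiny comm}}$ with the universal central extension of the simple (hence perfect) group $G$; universal central extensions of perfect groups are themselves perfect. Since $\mc S_{\mbox{\tiny comm}} \subseteq \mc S(\mc U)$, the group $\mc P(\mc U)/\mc S(\mc U)$ is a quotient of $\widetilde G$, and quotients of perfect groups are perfect.

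For the topological part, I would first verify that $\mc P(\mc U)$ is itself a topological group under the disjoint-union topology $\bigsqcup_n \mc P_n(\mc U)$ inherited from $G^n$. The concatenation $\rho_1 * \rho_2$ defined in the proof of Proposition \ref{genericity} is given on each $\mc P_m \times \mc P_n$ by left translations of the underlying coordinates in $G$, so it is continuous; inversion is continuous for the same reason. Since $\mc S(\mc U)$ is a normal subgroup, $\mc P(\mc U)/\mc S(\mc U)$ with its quotient topology is again a topological group. The endpoint projection is continuous on each $\mc P_n$ and therefore descends to a continuous surjective homomorphism onto $G$, whose algebraic kernel is $\mc C^G(\mc U)/\mc S(\mc U)$ equipped with the subspace topology.

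The main obstacle is upgrading this to a short exact sequence in the category of topological groups, which requires the descended projection to be open (equivalently, a topological quotient map), and the subspace topology on $\mc C^G(\mc U)/\mc S(\mc U)$ to coincide with the quotient topology induced from $\mc P(\mc U)/\mc S(\mc U)$. To handle this, I would construct explicit continuous local sections of the endpoint map at the level of paths. Near the identity of $G$, the Bruhat-like local product structure provided by the coarse Lyapunov root subgroups $U_r$ identified in Proposition \ref{genericity} yields a neighborhood $V \ni e$ and a continuous map $\sigma : V \to \mc P(\mc U)$ with $\pi_e^{\mc U} \of \sigma = \id_V$; translating by the group action provides such sections at every point of $G$. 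Post-composing with the quotient $\mc P(\mc U) \to \mc P(\mc U)/\mc S(\mc U)$ gives continuous local sections of the descended projection, which forces it to be open. Openness together with continuity is exactly the condition under which the short exact sequence of abstract groups becomes a short exact sequence of topological groups, and the subspace topology and quotient topology on the kernel agree by the standard argument.
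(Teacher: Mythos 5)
Your proposal is essentially correct and follows the same algebraic route as the paper: identify $\widetilde G = \mc P(\mc U)/\mc S_{\text{comm}}$ as the universal central extension via Lemma~\ref{univ-centr}, deduce perfectness, and pass to the quotient by $\mc S(\mc U)$. The one difference is that the paper does not simply cite the fact that universal central extensions of perfect groups are perfect; it gives its own short argument, using the uniqueness of the homomorphism $\widetilde G \to \widetilde G$ extending $\id_G$ to rule out $[\widetilde G,\widetilde G]$ being a proper subgroup. Both arguments are valid. For the topological half you are actually doing more than the paper's proof of this proposition does: the paper only addresses perfectness here and treats the exactness of the topological sequence as following from the quotient construction, deferring the local-section (hence openness) question to the later Lemma~\ref{section-exist}, whose proof runs through the observation that the multiplication map $U_{r_1}\times\cdots\times U_{r_n}\to G$ is a smooth homogeneous surjection, hence a submersion, hence admits local sections. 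Your appeal to a ``Bruhat-like local product structure'' near the identity is a slightly different (and a bit more hand-wavy) way of reaching the same conclusion; the paper's submersion argument is cleaner because it does not require isolating a specific local product decomposition. Net effect: same approach, with your version front-loading topological content the paper scatters across Proposition~\ref{top-cent-ext} and Lemma~\ref{section-exist}.
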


\begin{proof}
Note that because $\widetilde{G} = \mc P(\mc U) / \mc S_{\mbox{\tiny comm}}$ is a {\it universal} central extension of $G$, it must be perfect. We give a brief proof. The identity homomorphism must be the unique homomorphism of $\widetilde{G}$ which extends the identity on $G$, by the universal property of universal central extensions. Since $G$ is perfect, $[\widetilde{G},\widetilde{G}] \subset \widetilde{G}$ is also a central extension of $G$, so there exists a homomorphism $\widetilde{G} \to [\widetilde{G},\widetilde{G}]$ which extends the identity on $G$. Composing this with the inclusion of $[\widetilde{G},\widetilde{G}]$ into $\widetilde{G}$ will give a nonuniquness of the extension of the identity unless $\widetilde{G}$ is perfect. Since $\mc P(\mc U) / \mc S(\mc U)$ is a factor of $\mc P(\mc U) / \mc S_{\mbox{\tiny comm}}$, it must also be a perfect central extension of $G$.

\end{proof}

The topologies on the groups $\mc P(\mc U)$ and $\mc C^G(\mc U)$ are very unwieldy: while they have all of the nice separation properties, it lacks the key property of local compactness (since, even if a sequence of paths may stay very close to the identity, if the combinatorial length of the paths tends to $\infty$, they cannot converge). We have no guarantee, then, that the groups $\mc P(\mc U) / \mc S(\mc U)$ and $\mc C^G(\mc U) / \mc S(\mc U)$ are locally compact. The lack of local compactness makes many of the arguments that work in topological group theory fail.

We do not know whether the (normal closure of the) stable cycles $\mc S(\mc U)$ is a closed subgroup of $\mc C(\mc U)$ (in fact, we will see soon that it is not). Thus, the quotients $\mc C^G(\mc U) / \mc S(\mc U)$ and $\mc P(\mc U) / \mc S(\mc U)$ equipped with the quotient topology may not be Hausdorff. In fact, the group $\mc C^G(\mc U) / \mc S_{\mbox{\tiny comm}}$ in all computed cases for simply connected groups contains an uncountably infinite-dimensional vector space over $\Q$ as a factor (see eg, \cite{sah-wagoner}), but no point can be separated from the identity. That is, $\mc C^G(\mc U) / \mc S_{\mbox{\tiny comm}} = \overline{\set{e}}$, where the closure is taken in the group $\widetilde{G} = \mc P(\mc U) / \mc S_{\mbox{\tiny comm}}$.

This technique was used in previous arguments for local rigidity. Indeed, if we have that $\mc C^G(\mc U) / \mc S(\mc U)$ has the indiscrete topology, then any continuous homomorphism or action by this group is trivial. In place of this, we will show a weaker property of $\mc C^G(\mc U) / \mc S(\mc U)$ (Lemma \ref{key-lemma}) and that it suffices for the proofs of cocycle rigidity.

\begin{lemma}
\label{section-exist}
The projection $\mc P(\mc U) / \mc S(\mc U) \to G$ has a continuous local section
\end{lemma}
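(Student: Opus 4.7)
A continuous local section $\sigma : V \to \mc P(\mc U)/\mc S(\mc U)$ near $g_0 \in G$ must, by continuity, take values in a single combinatorial stratum $\mc P_N(\mc U)$, since $\mc P(\mc U) = \bigsqcup_n \mc P_n(\mc U)$ is a disjoint union indexed by length. Unwinding the parametrization $\mc P_N(\mc U) \cong U_{r_1} \times \cdots \times U_{r_N}$ attached to a word $(r_1, \ldots, r_N)$ in $\Delta$, the problem reduces to producing continuous maps $u_i : V \to U_{r_i}$ with $u_1(g) u_2(g) \cdots u_N(g) = g$ for all $g \in V$, since then the Lyapunov path $(e, u_N(g), u_{N-1}(g) u_N(g), \ldots, u_1(g) \cdots u_N(g))$ supplies a continuous lift to $\mc P_N(\mc U)$ whose image in $\mc P(\mc U)/\mc S(\mc U)$ serves as $\sigma$.

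The plan is to obtain the factorization by the implicit function theorem applied to the smooth multiplication map
\[ \Phi : U_{r_1} \times \cdots \times U_{r_N} \to G, \quad (u_1, \ldots, u_N) \mapsto u_1 u_2 \cdots u_N. \]
Identifying $T_{g_0}G$ with $\mf g$ by right translation, the differential of $\Phi$ at a preimage $(u_1^0, \ldots, u_N^0)$ of $g_0$ sends a tuple $(V_1, \ldots, V_N)$ with $V_i \in \mf g_{r_i}$ to $\sum_{i=1}^N \Ad(u_1^0 \cdots u_i^0)\, V_i$. If we can choose the word and basepoint so this map surjects onto $\mf g$, then $\Phi$ is submersive at $(u_i^0)$ and the IFT yields a smooth local right inverse $\sigma_0 : V \to U_{r_1} \times \cdots \times U_{r_N}$ on a neighborhood of $g_0$, delivering the desired factorization.

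The main technical obstacle is arranging surjectivity of $D\Phi$ at some preimage. Take $g_0 = e$; other cases are analogous. At the trivial preimage $u_i^0 = e$, the image of $D\Phi$ is only $\bigoplus_{r \in \Delta} \mf g_r$, which omits $\mf m := \Lie(N)$, so we must work at a non-trivial factorization of $e$—that is, an already-existing Lyapunov cycle. The key point is that $\Ad(\exp X_r)|_{\mf g_{-r}}$ with $X_r \in \mf g_r \setminus \{0\}$ introduces a non-zero component in $[\mf g_r, \mf g_{-r}] \subset \mf m$, and more generally the Baker--Campbell--Hausdorff expansion forces iterated Lie brackets of root vectors into $\Im(D\Phi)$ at non-trivial basepoints. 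Since $G$ is simple, $\bigoplus_{r \in \Delta} \mf g_r$ generates $\mf g$ as a Lie algebra—its Lie closure is readily checked to be an ideal via Jacobi and the $\Ad(\mf m)$-invariance of the root-space decomposition, hence equals $\mf g$—so every element of $\mf g$ is an iterated bracket of root vectors. Choosing a sufficiently long word $(r_i)$ that visits every root together with enough opposite pairs, and a basepoint $(u_i^0)$ with sufficiently many non-trivial entries in generic position, realizes every needed iterated conjugation and produces $\Im(D\Phi) = \mf g$. The IFT then completes the proof; the compact direction of $\mf m$ (arising when $G$ is not $\R$-split) is reached in the same fashion through brackets $[\mf g_r, \mf g_{r'}]$ of non-opposite root pairs, whose existence is guaranteed by the genuinely higher-rank hypothesis.
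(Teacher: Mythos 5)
Your overall strategy is the same as the paper's: reduce the problem to the stratum map $\Phi : U_{r_1}\times\cdots\times U_{r_N}\to G$, and apply the implicit function / submersion theorem. The divergence, and the gap, is in how you establish that $\Phi$ is submersive somewhere. The paper avoids this step entirely: it notes that $\Phi$ is smooth, surjective for $N$ large, and (crucially) that the endpoint map is a continuous \emph{group homomorphism} on $\mc P(\mc U)$, so by Sard's theorem there is a regular value, hence a submersion point, and homogeneity (right-multiplying by paths) then propagates local sections to a neighborhood of every $g_0\in G$. You instead attempt to exhibit a submersive basepoint ``by hand,'' and that argument is not a proof. The assertion that ``the Baker--Campbell--Hausdorff expansion forces iterated Lie brackets of root vectors into $\Im(D\Phi)$'' misidentifies what $\Im(D\Phi)$ is: it is the subspace $\sum_{i} \Ad(p_i)\Lie(U_{r_i})$ for partial products $p_i=u_1^0\cdots u_i^0$, not an algebra closed under brackets, and BCH alone does not extract individual iterated brackets as elements of that subspace. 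The correct spanning statement one needs is that $\Ad$-translates of root spaces span $\mf g$ (true, since the span is $\Ad$-invariant under the group generated by the $U_r$, which is all of $G$, and contains $\mf g_r\neq 0$, so simplicity gives the whole algebra) — but your proposal never sets up this argument, and ``a basepoint with sufficiently many non-trivial entries in generic position'' is not a substitute for it. Until that step is made precise, the proposal does not actually produce a regular point. A smaller slip: the tangent directions you vary live in $\Lie(U_{r_i})=\bigoplus_{t>0}\mf g_{tr_i}$, not just $\mf g_{r_i}$ (these differ when $2r_i$ is a root).
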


\begin{proof}
If we can find a continuous local section for $\mc P(\mc U) \to G$, then by composing with the projection $\mc P(\mc U) \to \mc P(\mc U)/\mc S(\mc U)$, we get the desired section. In the same vein, because the topology on $\mc P(\mc U)$ is the quotient of $\bigsqcup_{n \in \N} \mc P_n(\mc U)$, if we can find a section for one of the $\mc P_n(\mc U) \to G$, then we are done. But $\mc P_n(\mc U) = \left( \bigcup_{r \in \Delta} U_r \right)^n = \bigcup_{(r_1,\dots,r_n) \in \Delta^n} U_{r_1} \times \dots \times U_{r_n}$. For sufficiently large $n$, the projection $\mc P_n(\mc U) \to G$ is surjective. But the multiplication from $U_{r_1} \times \dots \times U_{r_n}$ is not only continuous but smooth. Furthermore, it is homogeneous, so surjectivity implies that it is a submersion. But submersions have continuous local sections, so we are done.
\end{proof}

We state here for reference later a fundamental result for central extensions of Lie groups:

\begin{proposition}
\label{central-splits}
Let $G$ be a connected, simply connected, semisimple Lie group, and $A$ an abelian Lie group. Let $H$ be a topological central extension of $G$ by $A$ with a continuous local section:

\[ 1 \to A \to H \to G \to 1\]

Then there is a splitting homomorphism $\sigma : G \to H$ (Thus, $H \cong G \times A$, and the projection $H \cong G \times A \to G$ is onto the first coordinate)
\end{proposition}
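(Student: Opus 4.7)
The plan is a three-step argument: promote the topological extension to an extension of Lie groups via the continuous local section; split at the Lie algebra level using Whitehead's second lemma; and integrate using simple connectivity of $G$.

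First, I would use the continuous local section $s \colon U \to H$ to give $H$ the structure of a Lie group. The map $\phi \colon U \times A \to \pi^{-1}(U)$ defined by $\phi(g,a) = s(g) a$ is a homeomorphism, exhibiting $\pi \colon H \to G$ as a locally trivial principal $A$-bundle near the identity. Translating this chart by elements of $H$ covers all of $H$ by charts homeomorphic to open subsets of $G \times A$, so $H$ is locally Euclidean. Combined with the fact that $G$ and $A$ have no small subgroups (so neither does $H$), the Gleason--Yamabe theorem promotes $H$ to a Lie group, and the sequence becomes a central extension in the category of Lie groups. Alternatively, one can argue more directly by smoothing the continuous local $2$-cocycle $c(g_1,g_2) = s(g_1 g_2)^{-1} s(g_1) s(g_2)$ within its cohomology class.

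Next, I would differentiate to obtain a central extension of Lie algebras $0 \to \mf{a} \to \mf{h} \to \mf{g} \to 0$ with trivial $\mf{g}$-action on $\mf{a}$. By Whitehead's second lemma for semisimple Lie algebras, $H^2(\mf{g}, \mf{a}) = 0$, so this extension splits: there is a Lie algebra homomorphism $\sigma_0 \colon \mf{g} \to \mf{h}$ with $d\pi \of \sigma_0 = \id_{\mf{g}}$. Since $G$ is connected and simply connected, $\sigma_0$ integrates to a unique Lie group homomorphism $\sigma \colon G \to H$. The composition $\pi \of \sigma \colon G \to G$ is then a Lie group endomorphism whose derivative at the identity is $\id_{\mf{g}}$, so by connectedness $\pi \of \sigma = \id_G$, and $\sigma$ is the desired splitting.

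The main obstacle is the first step: converting the hypothesis of a continuous local section into a genuine Lie group structure on $H$ compatible with the extension. Once this is in place, the remaining steps are classical Lie theory (Whitehead's lemma and the monodromy principle for simply connected groups), and the final product decomposition $H \cong G \times A$ follows immediately via $(g,a) \mapsto \sigma(g) a$.
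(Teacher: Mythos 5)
Your proof is correct and follows essentially the same route as the paper: use the local section to make $H$ locally Euclidean and invoke Hilbert's fifth problem (the paper cites Gleason--Montgomery--Zippin, you cite Gleason--Yamabe plus NSS) to promote $H$ to a Lie group, then split at the Lie algebra level and integrate. The only cosmetic difference is that the paper first quotients by $A^\circ$ to dispose of the disconnected part of $A$ via simple connectivity before applying Levi's theorem to the connected case, whereas you apply Whitehead's second lemma directly and let the integration step (together with the observation $H = \sigma(G)\cdot A$) absorb the disconnected case; these are equivalent, and your variant is marginally more streamlined.
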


\begin{proof}
We first conclude that $H$ is a Lie group, by the classical work of Gleason-Montgomery-Zippin (since there is a local section, $H$ is locally Euclidean). First, quotient by the connected component of $A$ in $H$ to arrive at an extension of $G$ by a discrete group $A / A^\circ$. Since $G$ is simply connected, there is a splitting homomorphism ($H / A^\circ$ must have the same Lie algebra that $G$ does). Hence $H \cong H_0 \times A/A^\circ$, and hence we may assume that $A$ and $H$ are connected.

Finally, once we have that $A$ is connected, we note that the extension splits only if the induced sequence of Lie algebras splits. The splitting of this is guaranteed by the classical Levi splitting theorem.

\end{proof}

The scheme for proving the periodic cycle functional vansihes is to pit the splitting theorem of Levi (which takes the form of Proposition \ref{central-splits}) against the perfectness of the universal central extension, leading to a contradiction.

\begin{definition}
A topological abelian group is {\it minimally almost periodic} if it has no continuous homomorphisms into locally compact groups
\end{definition}

\begin{lemma}[Key Lemma]
\label{key-lemma}
If $G$ is simply connected, $\mc C^G(\mc U) / \mc S(\mc U)$ is minimally almost periodic
\end{lemma}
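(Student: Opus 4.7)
The plan is to argue by contradiction: suppose there is a nontrivial continuous homomorphism $\varphi : \mc C^G(\mc U)/\mc S(\mc U) \to L$ into a locally compact group $L$, and then derive a contradiction with the perfectness of $\mc P(\mc U)/\mc S(\mc U)$ established in Proposition \ref{top-cent-ext}. As a first reduction, $\mc C^G(\mc U)/\mc S(\mc U)$ is abelian (it is a factor of the Schur multiplier by Proposition \ref{cycles-are-schur}), so the closure of the image of $\varphi$ is a locally compact abelian group. By Pontryagin duality for LCA groups, if $\varphi$ is nontrivial then some continuous character $\chi : \mc C^G(\mc U)/\mc S(\mc U) \to \T$ is also nontrivial. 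Hence the real work is to show that every continuous character into $\T$ vanishes.

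Given such a $\chi$, the next step is to build a pushout central extension
\[ 1 \to \T \to H \to G \to 1, \]
where $H = \bigl(\mc P(\mc U)/\mc S(\mc U) \times \T\bigr)/N$ with $N = \{(c, \chi(c)^{-1}) : c \in \mc C^G(\mc U)/\mc S(\mc U)\}$; centrality of $\mc C^G(\mc U)/\mc S(\mc U)$ inside $\mc P(\mc U)/\mc S(\mc U)$ makes $N$ normal. I would then check that the continuous local section of $\mc P(\mc U)/\mc S(\mc U) \to G$ provided by Lemma \ref{section-exist}, composed with the natural map $\mc P(\mc U)/\mc S(\mc U) \to H$, yields a continuous local section for $H \to G$. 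Since $\T$ is an abelian Lie group and $G$ is connected, simply connected, and semisimple, Proposition \ref{central-splits} produces a continuous splitting homomorphism $\sigma : G \to H$.

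The final step is to pit this splitting against perfectness. The canonical map $\Phi : \mc P(\mc U)/\mc S(\mc U) \to H$, $p \mapsto [(p,1)]$, and the composition $\sigma \of \pi_e^{\mc U}$ both lift the endpoint projection to $G$, so their ratio
\[ \Psi(p) := \Phi(p) \cdot \sigma\!\left(\pi_e^{\mc U}(p)\right)^{-1} \]
takes values in the central subgroup $\T$. A short calculation using the centrality of $\T$ shows $\Psi$ is a continuous homomorphism into an abelian group; since $\mc P(\mc U)/\mc S(\mc U)$ is perfect, $\Psi$ must be trivial. Therefore $\Phi = \sigma \of \pi_e^{\mc U}$ identically. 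Restricting to $\mc C^G(\mc U)/\mc S(\mc U) \subset \ker \pi_e^{\mc U}$ forces $\chi \equiv 1$, contradicting the choice of $\chi$.

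I expect the main obstacle to be the initial reduction from an arbitrary locally compact target to $\T$. One cannot directly invoke Proposition \ref{central-splits} for an arbitrary locally compact $L$, since pathological LCA groups such as solenoids or $p$-adic groups fail to be Lie and the Gleason--Montgomery--Zippin step underlying the splitting theorem does not immediately apply to the pushout extension. Exploiting that $\mc C^G(\mc U)/\mc S(\mc U)$ is abelian, so that Pontryagin duality delivers enough characters into $\T$, is what makes the Lie-theoretic splitting theorem available and allows the perfectness-versus-splitting tension to supply the contradiction.
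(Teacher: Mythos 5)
Your proof is correct and follows essentially the same strategy as the paper's: reduce to a circle-valued character, form a pushout central extension of $G$ by $\T$, use the local section together with Proposition~\ref{central-splits} to split it as a Lie extension, and derive a contradiction from perfectness of $\mc P(\mc U)/\mc S(\mc U)$. The only substantive organizational difference is that the paper first quotients $\widetilde G$ by $\ker f'$ to make the extension Hausdorff before forming the pushout, whereas you form the pushout directly --- this still works because $N$ is the graph of the continuous map $\chi^{-1}$ into the Hausdorff group $\T$, hence closed in $\mc C^G(\mc U)/\mc S(\mc U) \times \T$ and therefore closed in $\mc P(\mc U)/\mc S(\mc U) \times \T$ (as $\mc C^G(\mc U)/\mc S(\mc U)=\ker\pi_e^{\mc U}$ is closed), so $H$ is Hausdorff and Proposition~\ref{central-splits} applies; that step deserves to be made explicit in your write-up.
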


\begin{proof}
Let $f$ be a continuous homomorphism from $\mc C^G(\mc U)$ to a locally compact group $H$. By pullback, consider it as a homomorphism from $K = \mc C^G(\mc U) / \mc S_{\mbox{\tiny comm}}$. Let $f'$ denote this pullback, which will be trivial if and only if $f$ is. See the diagram below:

\[
\begin{tikzcd}[column sep=normal]
1 \arrow{r}{} & K \arrow{r}{} \arrow{d}{} \arrow{drr}[near end]{f'}& \widetilde{G} \arrow{r}{} & G \arrow{r}{} & 1 \\
&  \mc C^G(\mc U) / \mc S(\mc U) \arrow{r}{f} & Z \arrow[dashed]{r}{} & \R / \Z
\end{tikzcd}
\]

Since $K$ is central in $G$, $Z = \overline{f( \mc C^G(\mc U) / \mc S(\mc U))}$ must be abelian. By composing with a character of $Z$ (ie, a homomorphism $\varphi : Z \to \R / \Z$ which is nontrivial on $f'(K)$), we can without loss of generality assume that $Z$ is a subgroup of the circle (since $Z$ is locally compact, characters separate points). Consider the group $G_1 = \widetilde{G} / \ker f'$, and $K_1 = K / \ker f'$. Then $f'$ induces a continuous isomorphism from $K / K_1$ to $f'(K)$. Since the group $Z$ is Hausdorff, $\overline{\set{e}} \subset \ker f'$ and the group $G_1$ must Hausdorff as well. Now, consider $G_1 \times Z$, and the subgroup $K_1^\Delta = \set{ (k, f(k)) : k \in K_1} \subset K_1 \times Z \subset G_1 \times Z$ (the ``diagonal'' embedding of $K_1$ into $K_1 \times Z$). $K_1^\Delta$ is closed, because $K_1$ is closed in $G_1$. It is central (and hence normal), since $K_1$ is central in $G_1$. Note that $(K_1 \times Z) / K_1^\Delta$ is isomorphic to $Z$ via the surjective homomorphism $(x,y) \mapsto f'(x)^{-1}y$. So $G_2 = (G_1 \times Z) / K_1^\Delta$ is a topological central extension of $G$ by $Z$ ($G_2$ can be thought of the completion of $G_1$ with respect to the embedding of $K$ into $H$). Recall that there is a local continuous section for $\widetilde{G} \to G$ (and hence for $G_2 \to G$) by Lemma \ref{section-exist}.

\[
\begin{tikzcd}
1  \ar{r}{} & K  \ar{r}{} \ar{d}[swap]{/ \ker f'} & \widetilde{G} \ar{r}{} \ar{d}[swap]{/ \ker f'} & G \ar{r}{} \arrow[equals]{d}{} & 1 \\
1 \ar{r}{} & K_1 \ar{r}{} \ar{d}[swap]{\times Z} & G_1 \ar{r}{} \ar{d}[swap]{\times Z} & G \ar{r}{} \ar[equals]{d} & 1 \\
1 \ar{r}{} & K_1 \times Z \ar{r}{} \ar{d}[swap]{/ K_1^\Delta} & G_1 \times Z \ar{r}{} \ar{d}[swap]{/K_1^\Delta} & G \ar{r}{} \ar[equals]{d}{} & 1 \\
1 \ar{r}{} & Z \ar{r}{} & G_2 \ar{r}{} & G \ar{r}{} & 1
\end{tikzcd}
\]

 Since $G$ is simple and $Z$ is a closed subgroup of $\R / \Z$ (and hence a Lie group), the sequence $1 \to Z \to G_2 \to G \to 1$ splits as $G_2 \cong G \times Z$ by Proposition \ref{central-splits}.

Let $\varphi$ be the obvious map which embeds $G_1$ in $G_2$. Since $G_1$ is perfect, $\varphi(K_1) \subset \varphi\left([G_1,G_1]\right) \subset \left[\varphi(G_1),\varphi(G_1)\right] \subset [G_2,G_2] = G$. But by construction $\varphi(K_1) \subset Z$ (since it must be taken into the kernel of the projection of $G_2 \cong G \times Z \to G$), so $\varphi(K_1) \subset G \cap Z = \set{e}$. Since $\varphi$ is an embedding, we conclude that $K_1$ is trivial. So $f'$ and hence $f$ is trivial.

\end{proof}

\section{Holonomy Projections of Cycles}
\label{holonomy-sec}

In section \ref{lyap-sec}, the existence of two families foliations are asserted: $\set{\mc U_r}$ and $\set{\mc F_r}$, representing the coarse Lyapunov foliations for $\alpha_0$ and $\alpha$, respectively. Note that the foliations $\mc F_r$ are only H\"older: we first construct the coarse Lyapunov foliations $\overline{\mc F}_r$ for $\overline{\alpha}$, and taken to H\"older foliations $\mc F_r = h( \overline{\mc F}_r )$, where $h$ is the map provided by Theorem \ref{normal-stability}.

Let us work on the universal cover. Let $\mc P_x(\mc U)$ represent the space of paths using the family of foliations $\mc U$ based at $x$ (and similarly for $\mc P_x(\mc F)$). We will now establish the existence of holonomy projections $P^{x,y} : \mc P_x(\mc U) \to \mc P_y(\mc F)$, provided $y \in Nx$. For a detailed discussion, see \cite{kd2011}. For a path with a single leg $\rho = (x,z)$ with $z \in U_r(x)$, define:

\[ P^{x,y}(\sigma) = (y, \mc F_r(y) \cap Nz) \]

The definition makes sense locally, and can be extended to any $z \in U_r(x)$ by using the normally hyperbolic dynamics to bring it back to a neighborhood of $x$. Let us extend this definition inductively for a Lyapunov path $\sigma = (x_0,\dots,x_n)$. If $P^{x,y}(x_0,\dots,x_{n-1}) = \tau = (y_0,\dots,y_{n-1})$, let:

\[ P^{x,y}(\sigma) = (y_0,\dots,y_{n-1},\mc F_r(y_{n-1}) \cap Nx_{n-1}) \]

where $x_n \in \mc F_r(x_{n-1})$. Let us fix the basepoint $e \in G$ and denote $P = P^{e,e}$. Before proving Proposition \ref{u-cycles-proj}, we need a Definition and Lemma:

\begin{proposition}
\label{u-cycles-proj}
$P(\mc C^c(\mc U)) \subset \mc C^c(\mc F)$
\end{proposition}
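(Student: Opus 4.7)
The plan is to reduce the claim to showing that the final $N$-valued displacement of $P(\sigma)$ is trivial for every contractible cycle, and to extract that triviality from the Key Lemma (Lemma \ref{key-lemma}).

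Working on the universal cover of $G$, I would lift $\sigma \in \mc C^c(\mc U)$ to a Lyapunov path $(e, x_1, \dots, x_n)$ with $x_n = e$. By the inductive definition of $P$, namely $y_i = \mc F_{r_i}(y_{i-1}) \cap Nx_i$, the image $P(\sigma) = (e, y_1, \dots, y_n)$ satisfies $y_i \in N x_i$ for every $i$, and in particular $y_n \in N$. Showing $y_n = e$ suffices: then $P(\sigma)$ will be a cycle in the universal cover, hence contractible in $M$.

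Define $\delta \colon \mc C^c(\mc U) \to N$ by $\delta(\sigma) = y_n$. I would verify three properties of $\delta$ and then conclude via Lemma \ref{key-lemma}. First, continuity of $\delta$ follows from the leg-by-leg continuity of $P$ together with the combinatorial topology on $\mc P(\mc U)$ from section \ref{schur-sec}. Second, $\delta$ should be a group homomorphism with respect to the free-product structure of Proposition \ref{genericity}: the concatenation $\sigma_1 * \sigma_2$ means running $P$ through $\sigma_2$ to land at $\delta(\sigma_2) \in N$ and then continuing through $\sigma_1$'s combinatorics, and the $\mc N$-holonomy interpretation of $P$ should contribute an additional factor of $\delta(\sigma_1)$. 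Third, $\delta$ vanishes on stable cycles: a stable cycle with witness $a \in A$ lies in the $\alpha_0$-stable leaf $W^s_a(e) = \prod_{r(a)<0}U_r$, and by Theorem \ref{normal-stability} the $y_i$ lie in the corresponding $\bar\alpha$-stable leaf, which is transverse to $N$ at $e$; iterating the contracting element $a^k$ to shrink $\sigma$ into this transverse neighborhood and using continuity of $P$ gives $\delta(\sigma) = e$.

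Combining these, $\delta$ is a continuous homomorphism factoring through $\mc C^c(\mc U)/\mc S(\mc U)$. By Lemma \ref{key-lemma} this quotient is minimally almost periodic, and $N \cong N_A \times N_K$ is a locally compact Lie group, so $\delta \equiv e$; this gives $P(\sigma) \in \mc C^c(\mc F)$. The main technical obstacle is the homomorphism property of $\delta$: because $\mc F$ is not left $N$-invariant, the required equivariance of the holonomy under basepoint translation by elements of $N$ must be extracted from the shared transverse structure of $\mc U$ and $\mc F$ with the common neutral foliation $\mc N$, rather than from direct group invariance.
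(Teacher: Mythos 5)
Your overall strategy is the same as the paper's: produce from $P$ a continuous homomorphism from $\mc C^c(\mc U)/\mc S(\mc U)$ into a locally compact group and kill it with Lemma~\ref{key-lemma}. Your treatment of stable cycles (contract with a uniform $a$, compare contraction rates against the $N$-displacement) also matches the paper's first paragraph. But the step you flag as ``the main technical obstacle'' --- that $\delta\colon\sigma\mapsto y_n\in N$ is a homomorphism with respect to the group law on $N$ --- is precisely where the argument breaks, and you do not fill it. For $\delta$ to be a homomorphism into $N$ you would need that the $\mc F$-holonomy action of a $\mc U$-cycle on the leaf $N$ is left translation by $\delta(\sigma)$; but the paper explicitly notes only that this action is a $C^0$-perturbation of ``cycles acting trivially and $N$ acting by translations,'' not that the cycles act by translations. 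The family $\mc F$ is only H\"older and is not $N$-invariant, so the displacement at a general point $n\in N$ need not be $\delta(\sigma)\cdot n$, and there is no shared-transverse-structure argument in the paper (or that I can see) that rescues this.

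The paper's workaround is to avoid asserting $\delta$ is $N$-valued-homomorphic and instead build the homomorphism tautologically. It introduces $\widetilde{N}$, the subgroup of $\mc P(\mc U)$ of paths ending on $N$, lets it act on the leaf $N$ by $\mc F$-holonomies, takes the stabilizer $S=\Stab_{\mc F}(e)$, and sets $H=\overline{Z\cdot A}$. The composite $Z\hookrightarrow H\to H/S$ is automatically a continuous homomorphism (because $S$ is shown to be normal in $H$), and its image records exactly your $\delta(\sigma)$. The nontrivial content is then not a homomorphism property but local compactness of the target: $H/S$ is a closed subset of the coset space $\widetilde{N}/S$, which is shown to be homeomorphic to the manifold $N$ via the evaluation map, and closed subsets of locally compact spaces are locally compact. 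That is the form in which Lemma~\ref{key-lemma} can be applied. So your proposal and the paper both hinge on local compactness of the receiving group; the difference is that the paper obtains a locally compact group structure on $H/S$ by this stabilizer construction, rather than trying to verify that $\delta$ is a homomorphism into $N$ itself, and you should adopt that route to close the gap.
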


\begin{proof}
We will first show it for stable cycles. If $\sigma$ is a stable cycle at $e$, then its projection $P(\sigma)$ will be a Lyapunov path, and by construction of the projections, the endpoint of $P(\sigma)$ will be on the leaf $N$. But if we then begin to apply an element of our action which contracts the entire Lyapunov path (it is stable), it will contract all of the legs with exponential speed faster than the difference of the endpoints in $N$ (since the action is normally hyperbolic with respect to $N$). This is a contradiction.

So $P(\mc S(\mc U)) \subset \mc S(\mc F)$. Given a path $\rho \in \mc P(\mc U)$, it can be considered as a path from any basepoint by right translation (see the group structure of $\mc P(\mc U)$ in Section \ref{schur-sec}). This defines an action of $\mc P(\mc U)$ on $M$ by letting the path $\rho$ act by sending a point to the endpoint of its canonical projection $P(\rho)$. Since $\mc S(\mc U)$ acts trivially, this can be considered as an action of $\mc P(\mc U) / \mc S(\mc U) = \widetilde{G}$. We have shown that this action preserves the foliation into cosets $Nx$, and can be considered as an action on the universal cover, which we will now do.

Now consider the subgroup of $\mc P(\mc U)$ generated by all paths which end in $N$, which includes the group $\mc C^c(\mc U)$ (call this group $\widetilde{N}$). The group action of $\widetilde{N}$ by $\mc F$-holonomies is a $C^0$-perturbation of the cycle group acting trivially and $N$ acting by translations, so it must still act transitively on each leaf. Fix a leaf $Nx$, and consider the stabilizer of the $\mc F$-holonomy action $S = \Stab_{\mc F}(x)$.

For convenience, let $Z$ denote the group $\mc C^c(\mc U) / \mc S(\mc U)$ (ie, the group of $\mc U$-cycles modulo stable $\mc U$-cycles, which is a factor of the Schur multiplier of $G$). Furthermore, let $H = \overline{Z \cdot A}$. We claim that $\widetilde{N} / S$ is homeomorphic to $N$, since $\widetilde{N}$ acts transitively by $\mc F$-holonomies. This follows from the stability of local transitivity (Proposition \ref{local-transitive}), so an open set of $\mc U$-paths ending on $N$ send any point to a neighborhood of that point in $N$ via $\mc F$-holonomies (this implies that the evaluation maps is open, so modulo the stabilizer, it is open, continuous and bijective). Since $\widetilde{N} / S$ is also an algebraic quotient, we may let $H$ act by translation on the left, so that the orbits become the quotient $H/S$. 

We claim that $H / S$ is closed in $\widetilde{N} / S$. Suppose that $h_nS \to gS$ in $\widetilde{N}/S$. Then there exist points $s_n$ such that $h_ns_n \to g$ in $\widetilde{N}$. But every point of $H$ can be written as $h_n = \lim_{m \to \infty} a_m^{(n)}s_m^{(n)}$, since $Z$ is central and by definition of $H$. But then \[g = \lim_{\substack{m \to \infty \\ n \to \infty}} a_m^{(n)} (s_m^{(n)}s_n)\] and so $g \in H$. Furthermore, while $\widetilde{N}/ S$ may not be a group, we claim that, $H / S$ is an group. We must show that $S$ is normal in $H$. This follows from the fact that $Z$ is central, and hence if $h = \lim_{n \to \infty} a_ns_n \in H$ and $s \in S$, $hsh^{-1} = \lim_{n \to \infty} a_ns_nss_n^{-1}a_n^{-1} = \lim_{n \to \infty} s_nss_n^{-1} \in S$, since $S$ is closed. So $H / S$ has a group structure. 



But $H / S$ is also a closed subset of $\widetilde{N} / S$, which is homeomomrphic to $N$, and hence locally compact. If $Z \not\subset S$, then the obvious homomorphism $Z \hookrightarrow H \to H / S$ is nontrivial, and we contradict Lemma \ref{key-lemma}.

\end{proof}

\begin{corollary}
$S / Z$ is a discrete group
\end{corollary}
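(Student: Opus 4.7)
The plan is to realize $S/Z$ as a closed subgroup of the Lie group $N$ and then rule out positive dimension by comparing coset spaces.

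First, I will use Lemma \ref{section-exist} to upgrade the continuous surjective endpoint homomorphism $\epsilon \colon \widetilde{N} \to N$ to a topological group isomorphism $\widetilde{N}/Z \cong N$. The kernel of $\epsilon$ is exactly $Z$ (working on the universal cover, so $\mc C^c(\mc U) = \mc C^G(\mc U)$), and the continuous local section ensures $\epsilon$ is a quotient map. Because the $\mc F$-holonomy action of $\widetilde{N}$ on the Hausdorff orbit $Nx$ is continuous, its stabilizer $S$ is closed in $\widetilde{N}$, and by Proposition \ref{u-cycles-proj} we have $Z \subset S$. Hence $S/Z$ is a closed subgroup of the Lie group $N$, and by Cartan's closed subgroup theorem it is a Lie subgroup with a well-defined dimension.

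Second, I will compare the coset space $N/(S/Z)$ with $N$ itself. The $\mc F$-holonomy orbit map $\widetilde{N} \to Nx \cong N$ is a continuous surjection; Proposition \ref{local-transitive} provides continuous local sections for it, so it is also open, and it descends to a homeomorphism $\widetilde{N}/S \cong N$. Because the projection $\widetilde{N} \to \widetilde{N}/Z = N$ is itself a quotient map, the quotient topology on $\widetilde{N}/S$ coincides with the one on $(\widetilde{N}/Z)/(S/Z) = N/(S/Z)$. Combining the two identifications yields $N/(S/Z) \cong N$ as topological spaces.

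Third, the standard formula $\dim N/(S/Z) = \dim N - \dim(S/Z)$ for quotients of a Lie group by a closed subgroup then forces $\dim(S/Z) = 0$, so $S/Z$ is discrete.

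The main obstacle, and the reason the argument requires care, is ensuring that both identifications $\widetilde{N}/Z \cong N$ and $\widetilde{N}/S \cong N$ are genuine homeomorphisms rather than merely continuous bijections. Both upgrades rely on the existence of continuous local sections, provided by Lemma \ref{section-exist} and Proposition \ref{local-transitive} respectively; once these are in place, the dimension comparison is automatic.
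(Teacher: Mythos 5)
Your argument is essentially the same as the paper's: realize $S/Z$ as a subgroup of $N = \widetilde{N}/Z$, observe that $N/(S/Z) \cong \widetilde{N}/S \cong N$, and conclude by a dimension count. The one point you make explicit that the paper glosses over is the closedness of $S$ (via continuity of the holonomy action and the Hausdorff orbit) and hence of $S/Z$ in $N$, which is needed to invoke Cartan's closed-subgroup theorem before counting dimensions; that is a genuine, if small, gap filled, and the rest follows the same route through the local sections provided by Lemma \ref{section-exist} and Proposition \ref{local-transitive}.
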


\begin{proof}
We continue with the notations of the proof of Proposition \ref{u-cycles-proj}. Note that since $Z$ is central, and contained in $S$, $S/ Z \subset \widetilde{N} / Z = N$ is a subgroup. But $N / (S/Z)$ must be homeomorphic to $\widetilde{N} / S$, which is homeomorphic to $N$. But the only way for this to happen is to not lose dimension, so $S / Z$ must be discrete.
\end{proof}

Analogous projections $Q^{x,y} : \mc P_x(\mc F) \to \mc P_y(\mc U)$ can be defined in an obvious way. It is also clear that $P^{x,y} \of Q^{y,x} = \id$ and $Q^{x,y} \of P^{y,x} = \id$. We use the similar convention of setting $Q = Q^{e,e}$.

\begin{lemma}
\label{small-f-project}
There exists $\delta> 0$ such that if $\sigma \in \mc C^c(\mc F)$ is contained in a ball of radius $\delta$ in $G$, then $Q(\sigma) \in \mc C^c(\mc U)$
\end{lemma}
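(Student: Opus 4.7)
The plan is to combine the identity $P \of Q = \id$ with the discreteness of $S/Z$ from the corollary of Proposition~\ref{u-cycles-proj}; here $Z = \mc C^c(\mc U)/\mc S(\mc U)$ is the kernel of the projection $\widetilde G \to G$, and $S \subset \widetilde N$ is the stabilizer of $e$ under the $\mc F$-holonomy action. Two structural facts are immediate. Since $\sigma$ ends at $e \in N$, the endpoint of $Q(\sigma)$ lies in $N$, so the class of $Q(\sigma)$ in $\widetilde G$ lies in $\widetilde N$. Since $P(Q(\sigma)) = \sigma$ is an $\mc F$-cycle at $e$, that class also lies in $S$. Working on the simply connected cover, $Q(\sigma) \in \mc C^c(\mc U)$ is equivalent to the $\mc U$-endpoint of $Q(\sigma)$ in $G$ being equal to $e$, so the task is to rule out nontrivial endpoints.

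The mechanism is smallness. When $\sigma$ is contained in $B_\delta(e)$ every leg of $\sigma$ is short, so the local rule $(x,z) \mapsto (y, \mc F_r(y) \cap Nz)$ defines $Q$ directly, without appeal to the normally hyperbolic dynamics. Uniform continuity of this rule on a compact neighborhood of $e$ confines $Q(\sigma)$ to a ball of comparable size; in particular, for $\delta$ small, the endpoint $n \in N$ of $Q(\sigma)$ is arbitrarily close to $e$, and the class of $Q(\sigma)$ in $\widetilde G$ lies in any preassigned neighborhood of the identity.

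Apply the continuous local section $s: V \to \widetilde G$ of Lemma~\ref{section-exist}. For $\delta$ small we have $n \in V$, and the product $Q(\sigma) \cdot s(n)^{-1}$ lies in $Z$. Since $Z \subset S$ and $Q(\sigma) \in S$, also $s(n) \in S$. By continuity of $s$ with $s(e)=e$, further shrinking $\delta$ forces $s(n)$ into any preassigned neighborhood of the identity in $\widetilde G$. The corollary to Proposition~\ref{u-cycles-proj} gives that $S/Z$ is discrete, so we may choose such a neighborhood meeting $S$ only inside $Z$. Hence $s(n) \in Z$, and since $s$ is a genuine section of $\widetilde G \to G$, this forces $n = e$. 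Thus $Q(\sigma)$ is a $\mc U$-cycle at $e$, i.e., $Q(\sigma) \in \mc C^c(\mc U)$.

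The main obstacle is the quantitative control in the middle paragraph: ensuring that an iterated sequence of local holonomies applied to a geometrically small but possibly combinatorially long path produces an element close to the identity in $\widetilde G$. Confinement of $\sigma$ to $B_\delta(e)$ together with continuity of the single-leg holonomies on a compact neighborhood of $e$ should suffice, after which the discreteness of $S/Z$ converts this topological closeness into the required algebraic inclusion.
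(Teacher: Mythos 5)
Your proof has the right overall skeleton — project via $Q$, observe the endpoint lands in $N$, note the class of $Q(\sigma)$ lies in $S$, and invoke discreteness of $S/Z$ — but the decisive quantitative step has a genuine gap, and you flag it yourself without filling it. You assert that confinement of $\sigma$ to $B_\delta(e)$ together with continuity of the single-leg holonomy rule ``confines $Q(\sigma)$ to a ball of comparable size.'' This does not follow. A cycle $\sigma \in \mc C^c(\mc F)$ inside $B_\delta(e)$ can have arbitrarily large \emph{combinatorial} length, and the holonomy drift along $N$ is built up leg by leg: each leg introduces a discrepancy between $\mc U$ and $\mc F$ of order proportional to the leg length, and these accumulate over the (unbounded) number of legs. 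Uniform continuity of a single step gives no control over an iterated composition of unbounded length, so the endpoint $n$ of $Q(\sigma)$ need not approach $e$ as $\delta \to 0$, and the discreteness of $S/Z$ is then of no use. The section-based bookkeeping with $s(n)$ is fine but idle until one knows $n$ is small.

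The paper's proof supplies exactly the missing idea: it converts ``geometrically small but combinatorially unbounded'' into a product of pieces that are both geometrically and combinatorially bounded. By local transitivity, each intermediate vertex $x_i$ of $\sigma$ admits a short $\mc F$-path $\rho_{x_i}$ from $e$ of uniformly bounded combinatorics contained in $B_\ve(e)$, and then $\sigma$ factors as a product of cycles $\rho_{x_{i-1}} * \tau_i * \bar\rho_{x_i}$ each of which has a bounded number of legs and stays in $B_\ve(e)$. For such bounded-combinatorics pieces the endpoint of the $Q$-projection really is close to $e$, so discreteness of $S/Z$ forces each factor into $Z$, and the product is then a $\mc U$-cycle. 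To repair your argument you would need to insert this decomposition (or some equivalent control on combinatorial length) before invoking continuity and discreteness.
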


\begin{proof}
Recall that a the Lyapunov foliations $\mc F$ are locally transitive, hence given $\ve > 0$, there exists a $\delta > 0$ such if $x \in B_\delta(e)$, then there exists a path $\rho_x$ with a bounded length and number of legs which is contained in $B_\ve(e)$ which ends at $x$. But then if $\sigma = (x_0,x_1,\dots,x_n)$, and $\tau_i = (x_i,x_{i+1})$ are the paths representing the legs, we know:

\[ \sigma = (\tau_1 * \bar \rho_{x_1}) * (\rho_{x_1} * \tau_2 * \bar \rho_{x_2}) * (\rho_{x_2} * \tau_3 * \bar \rho_{x_3}) * \dots * ( \rho_{x_n} * \tau_n ) \]

But each conjugate $\rho_{x_{i-1}} * \tau_i * \bar\rho_{x_i}$ is a path of bounded combinatorics which stays inside $B_\ve(e)$. Hence the projection as a $\mc U$-path must end close to the identity if $\ve$ is sufficiently small. But since $S / Z$ is discrete, if a $\mc U$-path which projects to an $\mc F$-cycle (that is, it is in $S$) ends close to the identity, it must in fact be a $\mc U$-cycle (that is, it must be in $Z$). Hence, each of the paths $\rho_{x_{i-1}} * \tau_i * \bar \rho_{x_i}$ project to $\mc U$-cycles and so does $\sigma$.
\end{proof}

\begin{proposition}
\label{f-cycles-proj}
$Q(\mc C^c(\mc F)) \subset \mc C^c(\mc U)$
\end{proposition}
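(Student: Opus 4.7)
The plan is to reduce to Lemma \ref{small-f-project} via a null-homotopy/triangulation argument, using the left-$N$-equivariance of the $\mc U$-projection $Q$ to handle translated basepoints.

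Given $\sigma \in \mc C^c(\mc F)$, I would lift to the universal cover $G$, where $\sigma$ is a loop at $e$ in the simply connected Lie group $G$, and fix a continuous null-homotopy $H \colon D^2 \to G$ with $H|_{\partial D^2} = \sigma$. With $\delta$ as in Lemma \ref{small-f-project}, choose a triangulation of $D^2$ fine enough that each closed 2-simplex has $H$-image of diameter less than $\delta/3$, with the breakpoints of $\sigma$ occurring as boundary vertices. For each vertex $v$, select an $\mc F$-path $\gamma_v \in \mc P_e(\mc F)$ from $e$ to $H(v)$ of uniformly bounded combinatorial length (possible by local transitivity, Proposition \ref{local-transitive}); for boundary vertices $x_i$ take $\gamma_{x_i}$ to be the initial segment of $\sigma$ itself. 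Each 2-simplex $T$ with chosen basepoint vertex $v_T$ then produces a small $\mc F$-cycle $C_T$ at $H(v_T)$ of diameter less than $\delta$, formed by the three edge paths.

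The classical Van Kampen collapse of the triangulation yields an exact identity $\sigma = \prod_T \gamma_{v_T} \cdot C_T \cdot \bar\gamma_{v_T}$ in $\mc P_e(\mc F)$. For each factor, running the proof of Lemma \ref{small-f-project} at basepoint $H(v_T)$ shows that $Q^{H(v_T),H(v_T)}(C_T)$ is a $\mc U$-cycle at $H(v_T)$. Since $\mc U$ is left-$G$-invariant and $N$-cosets are fixed by left $N$-translation, if $y^*_T \in N\cdot H(v_T)$ denotes the endpoint of $Q(\gamma_{v_T})$ and $n_T := y^*_T H(v_T)^{-1} \in N$, then $Q^{H(v_T), y^*_T}(C_T) = n_T \cdot Q^{H(v_T),H(v_T)}(C_T)$ is a $\mc U$-cycle at $y^*_T$, and an analogous equivariance computation gives $Q^{H(v_T), y^*_T}(\bar\gamma_{v_T}) = \overline{Q(\gamma_{v_T})}$, returning to $e$. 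Hence each factor projects to a $\mc U$-cycle at $e$, and so does $\sigma$, proving $Q(\sigma) \in \mc C^c(\mc U)$.

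The hard part will be the combinatorial bookkeeping of the Van Kampen decomposition: verifying that the triangle-by-triangle collapse yields $\sigma$ as an exact concatenation in $\mc P_e(\mc F)$ rather than merely up to homotopy, and ensuring that $\mc F$-edge-paths along shared edges of adjacent triangles are chosen to cancel in pairs. The small-cycle lemma and the $N$-equivariance of $Q$ handle each individual factor, but the gluing requires the uniform bound on $|\gamma_v|$ coming from local transitivity and a careful choice of the order in which triangles are collapsed.
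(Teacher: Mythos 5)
Your approach matches the paper's: both subdivide a null-homotopy of the contractible $\mc F$-cycle into small cells (you use triangles, the paper uses square grid boxes), apply Lemma \ref{small-f-project} to each small resulting $\mc F$-cycle, and reassemble via a Van Kampen / conjugation decomposition. You are somewhat more careful about the basepoint bookkeeping (the $N$-equivariance of $Q$ under left translation, and the exact concatenation identity), which the paper treats as implicit, but the core argument is the same.
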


\begin{proof}
We continue with the notations in the proof of Proposition \ref{u-cycles-proj}. Since $Q$ is the inverse of $P$, this is equivalent to showing that every $\mc F$-cycle is realized by a $\mc U$-cycle. Note that every $\mc F$-cycle is realized by a $\mc U$-path ending on $N$, since we can use the canonical projection $Q$. Saying that every $\mc F$-cycle is realized as a $\mc U$-cycle is to say that $Z = S$ (where $Z = \ker (\widetilde{G} \to G)$ is the group of $\mc U$-cycles and $S = \Stab_{\mc F}(x)$ under the action of $\widetilde{N}$ by $\mc F$-holonomies).

Note that elements of $S / Z \subset N$ represent the possible endpoints of the projections of $\mc F$-cycles. We wish to show that $S / Z = \set{e}$, so that $S = Z$.

If $\sigma \in \mc C(\mc F)$ is a contractible cycle, then it can be traced out by a $C^0$-path $\gamma_0 : [0,1] \to G$. Furthermore, there exists a homotopy $\gamma_t$ which contracts $\gamma_0$. Let $\delta$ be as in Lemma \ref{small-f-project}. Furthermore, any $C^0$-path can be $\delta/2$ approximated by a $\mc F$-path on the manifold (even though the combinatorics can become very long).

But if we divide our homotopy into small boxes of the form $[k/n,(k+1)/n] \times [l/n,(l+1)/n]$, we may choose $n$ large enough so that each box is contained inside a ball of radius $\delta/2$. Furthermore, each such edge can be realized by an $\mc F$-path, losing $\delta/2$ of closeness, but thus guaranteeing each corresponding $\mc F$-cycle lies in a $\delta$ ball. But then each box represents an $\mc F$-cycle contained in a ball of radius $\delta$, so projects to a $\mc U$-cycle. So the original cycle $\sigma$ projects to a $\mc U$-cycle.



\end{proof}

Propositions \ref{u-cycles-proj} and \ref{f-cycles-proj} give us the following, which guarantees that if we show cocycle rigidity for $\alpha_0$, then we also get it for $\alpha$.

\begin{proposition}
\label{holonomy-iso}
$P$ induces a continuous isomorphism between $\mc C^c(\mc U)$ and $\mc C^c(\mc F)$. Furthermore, \[P(\mc S(\mc U)) = \mc S(\mc F)\]
\end{proposition}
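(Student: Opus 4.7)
The plan is to assemble the pieces already in place. Propositions \ref{u-cycles-proj} and \ref{f-cycles-proj} give us $P(\mc C^c(\mc U)) \subset \mc C^c(\mc F)$ and $Q(\mc C^c(\mc F)) \subset \mc C^c(\mc U)$, and the mutual inverse relations $P \of Q = \id$, $Q \of P = \id$ were recorded just before Lemma \ref{small-f-project}. So these restrictions give a set-theoretic bijection $\mc C^c(\mc U) \leftrightarrow \mc C^c(\mc F)$.

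For continuity, I would observe that both $P$ and $Q$ are defined inductively leg by leg, sending a leg of combinatorial type $r$ to a leg of the same type, via the continuous assignment $z \mapsto \mc F_r(y) \cap Nz$ (or its reverse). The topology on each cycle space is the quotient of $\bigsqcup_n M^n$, and the holonomy projection preserves combinatorial length, so continuity on each $\mc P_n$ is inherited from continuity of the H\"older foliations $\mc F_r$ supplied by Theorem \ref{normal-stability}. The homomorphism property requires a brief check that $P$ respects the group law $\rho_1 * \rho_2$ from Proposition \ref{genericity}, which is concatenation followed by right translation by the endpoint of $\rho_2$. Since the leaves $\mc F_r$ (and their intersections with cosets $Nz$) are invariant under the right action of $N$, the holonomy projection commutes with right translation; applied leg-wise this gives $P(\rho_1 * \rho_2) = P(\rho_1) * P(\rho_2)$.

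For the claim $P(\mc S(\mc U)) = \mc S(\mc F)$, note that the first paragraph of the proof of Proposition \ref{u-cycles-proj} shows that a stable $\mc U$-cycle projects to a stable $\mc F$-cycle: if $a \in A$ realizes stability of $\sigma$ in the sense that each $r_i(a) < 0$, then $P(\sigma)$ has the same combinatorial type $(r_1,\dots,r_n)$, and the same $a$ witnesses its stability. Since $P$ is a homomorphism by the previous paragraph, this containment of generators extends to the respective normal closures, yielding $P(\mc S(\mc U)) \subset \mc S(\mc F)$. The reverse inclusion is identical after swapping $P$ with $Q$ and $\mc U$ with $\mc F$, using Proposition \ref{f-cycles-proj} to keep $Q$-images inside $\mc C^c(\mc U)$.

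The main subtlety I expect is the homomorphism check: the group law on $\mc P(\mc U)$ is not simple concatenation but concatenation twisted by right translation, and one must verify that the inductive definition of $P$ (which uses the $N$-orbit $Nz$ explicitly) interacts correctly with this twist. Once the $N$-equivariance of the holonomy is recorded, however, the rest is bookkeeping, and the proposition follows.
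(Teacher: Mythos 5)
The paper does not actually supply a proof of this proposition; it only remarks that the statement ``follows from'' Propositions \ref{u-cycles-proj} and \ref{f-cycles-proj}, so you were filling in a gap the author left implicit. Your overall decomposition --- set-theoretic bijection from the two propositions together with $P \of Q = \id$, $Q \of P = \id$; continuity from the leg-by-leg construction over $\bigsqcup_n M^n$; a homomorphism check; and stable-cycle correspondence from the first paragraph of Proposition \ref{u-cycles-proj} --- is exactly the right skeleton, and the bijection, continuity, and stable-cycle pieces are sound.

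The homomorphism step as you wrote it has a genuine error. You assert that ``the leaves $\mc F_r$ (and their intersections with cosets $Nz$) are invariant under the right action of $N$,'' and you use this to claim $P$ commutes with right translation. This is false: the $\mc F_r$ are the H\"older Lyapunov foliations produced from a generic $C^1$-small perturbation via Theorem \ref{normal-stability}, pushed forward by a H\"older leaf conjugacy. The algebraic foliations $\mc U_r$ are right-invariant because their leaves are cosets of the subgroups $U_r$, but that is precisely the algebraic structure being destroyed by the perturbation; nothing forces $\mc F_r$ to be preserved by right translation. So the holonomy projection is not $N$-equivariant, and $P$ is not a homomorphism on the full twisted path group $\mc P(\mc U)$.

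Fortunately you never need that. On the cycle group the twist in the product from Proposition \ref{genericity} is trivial, because the endpoint by which one right-translates is $e$. Given $\sigma_1, \sigma_2 \in \mc C^c(\mc U)$, the product $\sigma_1 * \sigma_2$ is plain concatenation; now apply the inductive definition of $P$ leg by leg. After the legs of $\sigma_2$ are traversed, Proposition \ref{u-cycles-proj} tells you the running endpoint has returned to $e$, so the recursion computes $\mc F_r(e) \cap Nx$ for the first leg of $\sigma_1$ --- which is exactly the first point of $P(\sigma_1)$ --- and thereafter reproduces $P(\sigma_1)$ verbatim. Hence $P(\sigma_1 * \sigma_2) = P(\sigma_1) * P(\sigma_2)$ with no appeal to $N$-invariance of the perturbed foliations. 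With this replacement, and noting that the normal closure $\mc S$ is taken inside the cycle group so all conjugators are themselves cycles (and thus handled by the isomorphism you have just built), your argument for $P(\mc S(\mc U)) = \mc S(\mc F)$ goes through as written.
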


\begin{corollary}
\label{simple-transitive}
If $G$ is simply connected, the group of $\mc F$ holonomies is isomorphic to a Lie group
\end{corollary}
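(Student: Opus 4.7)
The plan is to realize the group of $\mc F$-holonomies acting on a single leaf as a quotient $\widetilde{N}/K$ of the topological group $\widetilde{N}$ introduced in the proof of Proposition \ref{u-cycles-proj}, and then to identify this quotient with a discrete quotient of the Lie group $N$ itself.

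Fix a leaf $Nx_0$ in the universal cover. By Proposition \ref{holonomy-iso}, the projection $P$ is a continuous isomorphism $\mc C^c(\mc U) \to \mc C^c(\mc F)$, so the $\mc F$-holonomy action on the leaf is indexed by exactly the group $\widetilde{N} \subset \mc P(\mc U)/\mc S(\mc U)$ appearing in the proof of Proposition \ref{u-cycles-proj}. Let $K$ denote the kernel of the resulting homomorphism $\widetilde{N} \to \Homeo(Nx_0)$; this is automatically normal, so the group of $\mc F$-holonomies is $\widetilde{N}/K$. To trap $K$: the central subgroup $Z = \mc C^c(\mc U)/\mc S(\mc U)$ is the kernel of the projection $\widetilde{N} \to N$ and acts trivially on the leaf, so $Z \subseteq K$; and since the kernel of a group action sits inside every stabilizer, $K \subseteq S := \Stab_{\mc F}(x_0)$. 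Because $Z$ is central in $\widetilde{N}$ and $K$ is normal, the third isomorphism theorem yields
\[
  \widetilde{N}/K \;\cong\; N/(K/Z).
\]

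By the Corollary immediately following Proposition \ref{u-cycles-proj}, $S/Z$ is discrete in $N$. Since $K/Z$ is a subgroup of $S/Z$, it too is discrete, and therefore closed in the Hausdorff group $N$. Consequently $\widetilde{N}/K$ is the quotient of the Lie group $N$ by a closed subgroup, hence itself a Lie group by the classical theory of Lie groups. The substantive content has already been packaged into the Key Lemma and Proposition \ref{u-cycles-proj}; the present corollary is essentially bookkeeping. The one point requiring care is to distinguish the kernel $K$ of the holonomy action from the stabilizer $S$ at $x_0$, and to use the inclusion $K \subseteq S$ to transfer discreteness of $S/Z$ to $K/Z$ before invoking the Lie-theoretic quotient.
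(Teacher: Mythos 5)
Your proof is correct, but it takes a genuinely different route from the paper's. The paper argues directly for \emph{simple} transitivity of the holonomy action: given two $\mc F$-paths from $x$ to $y$, their concatenation with reversal is an $\mc F$-cycle, which by Proposition~\ref{f-cycles-proj} projects under $Q$ to a $\mc U$-cycle, and $\mc U$-cycles were shown in the proof of Proposition~\ref{u-cycles-proj} to act trivially on the leaf. Since Proposition~\ref{f-cycles-proj} establishes $S = Z$, and Proposition~\ref{u-cycles-proj} gives $Z \subseteq K$, the chain $Z \subseteq K \subseteq S = Z$ collapses and the holonomy group is exactly $N$; the Lie group conclusion is then immediate from the simply transitive action on the manifold $Nx$. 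Your argument, by contrast, deliberately uses only the weaker input that $S/Z$ is discrete (the Corollary following Proposition~\ref{u-cycles-proj}, which does not depend on Proposition~\ref{f-cycles-proj}), and recovers the Lie structure by the third-isomorphism-theorem bookkeeping $\widetilde{N}/K \cong N/(K/Z)$ with $K/Z$ discrete hence closed. This is more economical in its inputs. However, be aware that the literal statement of the corollary is weaker than what the paper actually establishes here and subsequently uses: the proof of Proposition~\ref{homeo} invokes not merely that the holonomy group is Lie, but that the $\mc F$-holonomies act \emph{simply} transitively on $Nx$ (so the holonomy $F$ with $F(x) = y$ is unique). Your proof does not yield $K = S$, only $K \subseteq S$ with $S/K$ discrete, so if one were to substitute your argument for the paper's, the simple transitivity used in Proposition~\ref{homeo} would still need to be extracted --- most naturally by invoking $S = Z$ from Proposition~\ref{f-cycles-proj}, at which point your squeeze $Z \subseteq K \subseteq S$ closes up and the two approaches coincide. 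One further small point worth flagging: the isomorphism $\widetilde{N}/Z \cong N$ that your third-isomorphism step relies on needs to be a homeomorphism, not merely an abstract group isomorphism, for $N/(K/Z)$ to inherit the Lie topology; this holds because the continuous local section of Lemma~\ref{section-exist} restricts to one for $\widetilde{N} \to N$, making the endpoint map open, but it deserves an explicit word.
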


\begin{proof}
It acts transitively, because it is a perturbation of the foliation family $\mc U$, which obviously act (simply) transitively.

If we choose 2 different $\mc F$-paths $\rho_1$ and $\rho_2$ to carry a point $x$ to a point $y$, then the path $\rho_1 * \overline{\rho_2}$ would be a cycle at $x$. By Proposition \ref{f-cycles-proj}, this projects to a $\mc U$-cycle. But as shown in the proof of Proposition \ref{u-cycles-proj}, the projection of any $\mc U$-cycle acts trivially on $Nx$, and hence projecting back to now a family of $\mc F$-cycles, we see that taking $\rho_1$ or $\rho_2$ leads to the same map on $Nx$. Thus, the group of $\mc F$-holonomies is isomorphic to a Lie group (since it acts simply transitively on a manifold, it is a manifold itself).
\end{proof}

\begin{proof}[Proof of Proposition \ref{homeo}]
We pass to the universal cover. Then the map $h(x) = H(x)^{-1}h'(x)$ still preserves the leaves $Nx$, and being $C^0$-close to the identity, is surjective on each leaf. Hence we need only show injectivity. Suppose that $h(x) = h(y)$. Since on the universal cover the $\mc F$-holonomies act simply transitively on $Nx$, there exists a unique $\mc F$-holonomy $F : Nx \to Nx$ such that $F(x) = y$. Since $h$ is a semiconjugacy, and takes Lyapunov foliations to Lyapunov foliations, we immediately get that $h \of F = h$. But since $h$ is close to the identity, the preimage $h^{-1}(h(x))$ must be contained in a small neighborhood of $x$.

But the holonomy group is a Lie group, because it acts simply transitively on a manifold (and is hence a manifold itself) and can have no small subgroups. We have arrived at a contradiction, provided our perturbation is sufficiently small.
\end{proof}

\section{A proof of Theorem \ref{main}}
\label{proofs-sec}

Throughout this section, we will assume the group $G$ is simply connected, without loss of generality (we may always lift the lattice to the universal cover).

\begin{theorem}
\label{contractible-vanishing}
If $G$ is a simply connected, simple Lie group, and $\beta$ is a sufficiently small H\"older cocycle over $\alpha_0$, the periodic cycle functional $P_\beta$, restricted to $\mc C_e^G(\mc U)$, is trivial.
\end{theorem}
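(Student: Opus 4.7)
The plan is to observe that nearly all the content of the theorem has been pre-compiled into the structural results of Section \ref{schur-sec}, and that the proof reduces to a two-step diagram chase.

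First I would note that, by definition, $P_\beta$ is a continuous group homomorphism from $\mc C_e(\mc U)$ into the (locally compact) target Lie group $H$ of $\beta$; the smallness hypothesis is precisely what makes the defining limit $p_\beta(x,y) = \lim_{n\to\infty}\beta(na,x)^{-1}\beta(na,y)$ converge and depend continuously on the endpoints, so that $P_\beta$ is continuous with respect to the canonical topology on $\mc C_e(\mc U)$ from Section \ref{schur-sec}. By Lemma \ref{stable-vanishing}, $P_\beta$ annihilates the normal closure $\mc S(\mc U)$ of the stable cycles. Restricting to the subgroup $\mc C_e^G(\mc U)$ (which, since $G$ is simply connected, consists exactly of the contractible cycles), $P_\beta$ therefore descends to a continuous homomorphism of topological groups
\[
\overline{P_\beta} : \mc C_e^G(\mc U) / \mc S(\mc U) \longrightarrow H.
\]

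Next I would invoke the Key Lemma (Lemma \ref{key-lemma}): for $G$ simply connected the group $\mc C_e^G(\mc U)/\mc S(\mc U)$ is minimally almost periodic, i.e.\ admits no nontrivial continuous homomorphism into any locally compact group. Applied to the Lie group $H$, this forces $\overline{P_\beta}$ to be trivial, and hence $P_\beta|_{\mc C_e^G(\mc U)} \equiv e$, which is exactly the claim.

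The main obstacle has thus already been surmounted in establishing the Key Lemma: that is where the Levi splitting (Proposition \ref{central-splits}) is pitted against perfectness of the universal central extension of $G$, and where the continuous local section of Lemma \ref{section-exist} is essential to guarantee that $G_2$ is a genuine topological central extension to which Levi splitting applies. Once those ingredients are in place, along with the identification of $\mc C_e^G(\mc U)/\mc S(\mc U)$ as a quotient of the Schur multiplier via Deodhar's lemma (Proposition \ref{cycles-are-schur}), Theorem \ref{contractible-vanishing} follows in the few lines above with no additional dynamical input.
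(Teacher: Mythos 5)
Your proposal is correct and follows exactly the argument the paper gives: $P_\beta$ is a continuous homomorphism that kills $\mc S(\mc U)$ by Lemma \ref{stable-vanishing}, so it descends to a continuous homomorphism from $\mc C_e^G(\mc U)/\mc S(\mc U)$ into a locally compact group, which must be trivial by the Key Lemma (Lemma \ref{key-lemma}). The only minor quibble is your claim that the smallness hypothesis is what makes the defining limit converge; convergence actually follows from H\"older regularity together with exponential contraction along stable leaves, and smallness enters the argument only later (in Proposition \ref{simple-case}) to rule out the finite contribution from $\pi_1(M)\cong\Gamma$.
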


\begin{proof}
This is a direct application of Lemmas \ref{stable-vanishing} and \ref{key-lemma}.
\end{proof}

\begin{proposition}
\label{simple-case}
If $G$ is a simple Lie group, then the conclusions of Theorems \ref{cocyle-rig} and \ref{main} hold
\end{proposition}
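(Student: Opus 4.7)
The plan is to verify cocycle rigidity (Theorem \ref{cocyle-rig}) for simple $G$; the main rigidity theorem will then follow from the reductions of Section \ref{pcf-sec}, combining Proposition \ref{correction-cocycle}, Proposition \ref{homeo}, and Proposition \ref{pcf-suff}. Lifting the lattice to the universal cover if necessary, assume $G$ is simply connected. Fix a sufficiently small H\"older cocycle $\beta : A\times M \to N$ over $\alpha$, with $N$ abelian or compact. In view of Proposition \ref{pcf-suff} and Proposition \ref{local-transitive}, it suffices to show that the periodic cycle functional $P_\beta : \mc C_e(\mc F) \to N$ is identically trivial. Lifting Lyapunov cycles in $M$ to paths in $G$ whose endpoints land in $\Gamma$ gives a continuous surjective homomorphism $\pi : \mc C_e(\mc F) \to \Gamma$ with kernel $\mc C^c(\mc F)$, so vanishing of $P_\beta$ splits naturally into two tasks: vanishing on contractible cycles, and triviality of the induced map $\Gamma \to N$.

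For the contractible part, I transfer the problem to the homogeneous picture via the continuous isomorphism $P : \mc C^c(\mc U) \to \mc C^c(\mc F)$ from Proposition \ref{holonomy-iso}, which by the same proposition satisfies $P(\mc S(\mc U)) = \mc S(\mc F)$. The composition $P_\beta \circ P$ is then a continuous homomorphism $\mc C^c(\mc U) \to N$ that vanishes on $\mc S(\mc U)$ (by Lemma \ref{stable-vanishing} applied to $\beta$), hence descends to $\mc C^c(\mc U)/\mc S(\mc U)$. Since $G$ is simply connected this coincides with $\mc C^G(\mc U)/\mc S(\mc U)$, which is minimally almost periodic by Lemma \ref{key-lemma}; as $N$ is locally compact, the descended map is trivial and so $P_\beta$ vanishes on $\mc C^c(\mc F)$.

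Thus $P_\beta$ descends to a continuous homomorphism $\bar P_\beta : \Gamma \to N$. By Corollary \ref{lattice-finite}, the order of $\bar P_\beta(\Gamma)$ is bounded by a constant $C$ depending only on $\Gamma$ and $N_K$. Choose a finite generating set $\gamma_1,\ldots,\gamma_k$ of $\Gamma$ and represent each $\gamma_i$ by a Lyapunov path of uniformly bounded combinatorial length, using Proposition \ref{local-transitive}. Because the length is bounded independently of $\beta$, continuity of $P_\beta$ on the fixed-length strata of $\mc C_e(\mc F)$ forces $\bar P_\beta(\gamma_i) \to e$ in $N$ as $\|\beta\|_{C^0} \to 0$. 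Since $N$ is a Lie group, the exponential map supplies a neighborhood of $e$ containing no nontrivial subgroup of order at most $C$; for $\beta$ sufficiently small each $\bar P_\beta(\gamma_i)$ lies in this neighborhood, so the subgroup they generate is trivial, giving $\bar P_\beta \equiv 0$.

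Combining the two cases yields triviality of $P_\beta$, so Proposition \ref{pcf-suff} produces the continuous transfer map making $\beta$ cohomologous to a constant, completing Theorem \ref{cocyle-rig}; Theorem \ref{main} then follows from the correction scheme of Proposition \ref{correction-cocycle} and Proposition \ref{homeo}, together with the nonstationary normal forms upgrade of continuity to $C^\infty$ described in the Introduction. I expect the delicate step to be verifying that the decomposition of $\mc C_e(\mc F)$ via $\pi$ genuinely produces a well-defined continuous homomorphism $\bar P_\beta : \Gamma \to N$, particularly because $\mc C^c(\mc F)$ is not known to be closed and the ambient cycle group is not locally compact; the saving grace is that once $P_\beta$ is shown to vanish on $\mc C^c(\mc F)$ via Lemma \ref{key-lemma}, the factorization through $\Gamma$ is automatic and the remainder reduces to a clean finiteness argument for homomorphisms out of higher-rank lattices.
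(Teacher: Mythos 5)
Your argument is correct and follows essentially the same route as the paper: reduce Theorem \ref{main} to Theorem \ref{cocyle-rig} via the Section \ref{pcf-sec} machinery, transfer to the homogeneous picture via Proposition \ref{holonomy-iso}, kill the contractible part via Lemma \ref{stable-vanishing} and Lemma \ref{key-lemma} (which is precisely the content of Theorem \ref{contractible-vanishing}, cited directly in the paper's proof), and then bound the induced map on $\Gamma$ using Corollary \ref{lattice-finite} together with a smallness argument. The only difference is that you spell out the descent to $\Gamma$ and the no-small-subgroups step for finite-order elements in more detail than the paper, which compresses these to a sentence.
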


\begin{proof}
The discussion preceding the statement of Theorem \ref{cocyle-rig} (ie, Propositions \ref{correction-cocycle} and \ref{homeo}) tells us that proving Theorem \ref{cocyle-rig} is sufficient for Theorem \ref{main}. We use the criterion presented in Proposition \ref{pcf-suff} to show rigidity of cocycles. Proposition \ref{holonomy-iso} allows us to consider the foliations associated to the action $\alpha_0$, rather than $\alpha$. Theorem \ref{contractible-vanishing} guarantees that it vanishes on the contractible cycles, so the periodic cycle functional will induce a function on $\mc C_e(\mc U) / \mc C_e^c(\mc U) = \pi_1(M) = \Gamma$. That is, it induces a homomorphism from $\Gamma$ to the centralizer of the Cartan. But by Corollary \ref{lattice-finite}, such a homomorphism has its image discrete and finite with fixed bounded order. Thus, by reducing the size of the cocycle (which can be obtained by reducing the size of the perturbation), we can guarantee its image is trivial.

\end{proof}

\begin{remark}
While this argument suffices to show that the periodic cycle functional vanishes, more is true. In \cite{dupont-parry-sah}, it is shown that the induced map on $H_2(G) \to H_2(SL(d,\C)) = K_2(\C)$ is injective when $G$ is absolutely simple and not of type E or F. In the original work for partially hyperbolic systems, it was shown that $\mc C(\mc U) / \mc S(\mc U) \cong \overline{\set{e}}$ for $SL(d,\C)$. Combining these results gives that $H_2(G) = \overline{\set{e}}$ in the topology of the universal central extension by continuity (ie, $\mc C^c_e(\mc U) = \overline{\mc S_e(\mc U)}$). Showing that the stable cycles were dense in the space of contractible cycles is the method used in \cite{kd2011}, \cite{damjanovic07}, \cite{zwang-1} and \cite{zwang-2}. Previous authors faced the difficulty of showing this directly, employing an argument similar to that of Milnor's original arguments as it appears in \cite[Theorem A.1]{milnor-intro} to show $\mc S_e(\mc U)$ is dense in $\mc C_e(\mc U)$. The approach presented here circumvents this problem by showing the vanishing of the periodic cycle functional without considering ``how large'' the normalizer of the stable cycle group is directly.

As a consequence of the previous method, we get a statement stronger than usual about extensions of Lie groups (to the author's best knowledge, the condition of local compactness of the extension was always needed previously):

\begin{proposition}
If $G$ is an absolutely simple, simply connected Lie not of type E or F, then there does not exist a nontrivial perfect Hausdorff central extension of $G$. 
\end{proposition}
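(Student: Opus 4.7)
\noindent\emph{Proof proposal.}
The plan is to combine the abstract universality of $\widetilde{G} := \mc P(\mc U)/\mc S_{\mbox{\tiny comm}}$ (Lemma \ref{univ-centr}) with the topological density $\mc C^c_e(\mc U) = \overline{\mc S_e(\mc U)}$ recorded just above, which identifies the kernel $K := \ker(\widetilde{G} \to G)$ with the closure of $\set{e}$ in the natural topology on the universal central extension. The point is that under any continuous homomorphism out of $\widetilde{G}$ into a Hausdorff target, this identification immediately collapses $K$ to a point.

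Given a putative perfect Hausdorff central extension $1 \to A \to H \to G \to 1$, I would first apply Lemma \ref{univ-centr} to obtain the unique abstract group homomorphism $\phi \colon \widetilde{G} \to H$ lifting the identity on $G$, and note that $\phi$ is surjective: since $\widetilde{G}$ is perfect (Proposition \ref{top-cent-ext}), $H = \phi(\widetilde{G}) \cdot A$ with $A$ central, and $H$ is perfect, one has $H = [H,H] = [\phi(\widetilde{G}), \phi(\widetilde{G})] = \phi(\widetilde{G})$. Hence $A = \phi(K)$. The argument then closes as follows: assuming $\phi$ is continuous with respect to the quotient topology on $\widetilde{G}$ inherited from $\bigsqcup_n \mc P_n(\mc U)$, Hausdorffness of $H$ gives
\[
\phi(K) \subset \phi\left(\overline{\set{e}}\right) \subset \overline{\phi(\set{e})} = \overline{\set{e_H}} = \set{e_H},
\]
so $A = \set{e_H}$ and the extension is trivial.

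The main obstacle is therefore continuity of $\phi$. By the free-product description of Proposition \ref{genericity} and continuity of multiplication in $H$, this reduces to continuity of each restriction $\phi|_{U_r} \colon U_r \to H$, an abstract group homomorphism lifting the inclusion $U_r \hookrightarrow G$ into the induced Hausdorff central extension $\pi^{-1}(U_r) \to U_r$ of the simply connected unipotent Lie subgroup $U_r$. Here the nilpotent structure of $U_r$ enters: the Baker--Campbell--Hausdorff formula produces an explicit continuous algebraic section of the pullback extension, and uniqueness of the universal abstract lift (pinned down on the one-parameter subgroups generating $U_r$) identifies $\phi|_{U_r}$ with this continuous section. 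This is the heart of the argument; it replaces the local compactness hypothesis used in Lemma \ref{key-lemma} by mere Hausdorffness of the target, which is exactly what lets the conclusion go beyond previously known results.
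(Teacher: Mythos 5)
Your proposal captures the skeleton the paper has in mind: pass through the universal central extension $\widetilde{G}$, use perfectness on both sides to get surjectivity of the universal lift $\phi$, and then invoke $\mc C^c_e(\mc U) = \overline{\mc S_e(\mc U)}$ (i.e.\ $K = \overline{\set{e}}$ in $\widetilde{G}$) together with Hausdorffness of the target. You have also correctly isolated the crux --- that the abstract lift $\phi \colon \widetilde{G} \to H$ must be shown continuous before the inclusion $\phi(\overline{\set{e}}) \subset \overline{\set{e_H}}$ means anything. The paper itself offers no explicit proof here (the Proposition is presented as a ``consequence of the previous method''), so you deserve credit for making the missing step visible rather than silently asserting continuity.

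The problem is that your resolution of the continuity step does not hold up. Two points. First, the BCH claim: a continuous (let alone algebraic) section of $\pi^{-1}(U_r) \to U_r$ requires some Lie or at least locally compact structure on $\pi^{-1}(U_r)$; the Baker--Campbell--Hausdorff formula lives in the Lie algebra of a Lie group, and for a merely Hausdorff $H$ there is no reason $\pi^{-1}(U_r)$ should carry a compatible exponential map. Second, and more seriously, the ``uniqueness'' you invoke is uniqueness of the homomorphism $\phi \colon \widetilde{G} \to H$ lifting the identity of $G$, which is a statement about the whole map; it does not pin $\phi|_{U_r}$ to any preferred lift of $U_r \hookrightarrow G$. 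Since $U_r$ is nilpotent (hence far from perfect), the set of abstract homomorphic lifts $U_r \to \pi^{-1}(U_r)$ of the inclusion is a torsor over $\Hom(U_r, A)$, which contains a profusion of $\Q$-linear but discontinuous maps when $U_r$ is abelian; nothing in the universal property you cite selects the continuous one, and commutators in $H$ cannot detect which lift was taken because the ambiguity is central. So ``pinned down on one-parameter subgroups'' is not justified, and the reduction to the free-product pieces $U_r$ stalls exactly where it needed to close. A correct treatment would have to either work with the canonical (already continuous) quotient topology on $\widetilde{G}$ and characterize the Hausdorff quotients of $\widetilde{G}$ directly, or impose the usual regularity on $H$ (continuous local section, local compactness of $A$) and run the Levi-splitting argument of Lemma~\ref{key-lemma} and Proposition~\ref{central-splits} --- at the cost of exactly the hypotheses the paper claims to have removed.
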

\end{remark}

\begin{proof}[Proof of Theorem \ref{main} and Theorem \ref{cocyle-rig}]
Let $G$ be semisimple, with all its simple factors of real rank greater than 2, so that $G = \prod_i G_i$. Then any split Cartan subalgebra $\mf h \subset \mf g = \bigoplus_i \mf g_i$ decomposes as $\mf h = \bigoplus_i \mf h_i$, where $\mf h_i$ is a split Cartan subalgebra for $\mf g_i$. If $\Delta_i$ are the set of roots for $\mf g_i$, then we get a splitting:

\[ \mf g = \mf h \oplus \bigoplus_i \bigoplus_{r \in \Delta_i} \mf g_r \]

Furthermore, if $r_1 \in \Delta_i$ and $r_2 \in \Delta_j$, and $r_1 + r_2 \in \Delta$, then $i = j$, and $r_1 + r_2 \in \Delta_i$. This implies, by the genericity assumption \gen, that if $g_1 \in U_{r_1}$ and $g_2 \in U_{r_2}$, then $[g_1,g_2] \in \mc S(\mc U)$. So in the group $\mc C_e^c(\mc U) / \mc S(\mc U)$, any element can be written uniquely as a product of cycles in each $G_i$. That is, $\mc C_e^c(\mc U)/ \mc S_e(\mc U) = \prod_i \mc C_e^{c,i}(\mc U) / \mc S_e^i(\mc U)$, where $\mc C_e^{c,i}(\mc U)$ is the contractible cycle group for the group $G_i$.

The periodic cycle functional, restricted to each component $\mc C_e^{c,i}(\mc U)$, must be trivial, hence it is trivial on all of $\mc C_e^c(\mc U)$ by Proposition \ref{simple-case} (note that we may again substitute the foliation family $\mc F$ for $\mc U$ freely by Proposition \ref{holonomy-iso}). By Proposition \ref{lattice-rig}, we get that for sufficiently small cocycles, the periodic cycle functional vanishes. By Proposition \ref{pcf-suff} and Proposition \ref{holonomy-iso}, we get the result.

\end{proof}

\section{Comments on the Remaining Cases}
\label{next-steps-sec}

The results of this paper essentially settle the rigidity program for restrictions of Cartan actions. However, a few persistent cases exist. One may try to consider actions of abelian subgroups that are not ``diagonalizable.'' That is, an abelian subgroup which does not fit inside a Cartan subalgebra for a (semi)simple group, but still has its Lyapunov spaces generating the Lie algebra.

Suspensions of toral automorphism actions have been shown to be rigid. Indeed, it is more conducive to study the first-return action of $\Z^k$ (ie, the toral automorphism actions themselves). The only condition required here is ``no rank one factors'' (see \cite{kdtoral}), which is weaker than condition \gen. Furthermore, in the presence of rank one factors, the action fails to be $C^{\infty,k,\infty}$-rigid. It is hoped that a similar characterization may exist for actions on semisimple groups, as it is known that the genericity assumption \gen \, is not a necessary condition for rigidity \cite{zwang-2}.

Automorphism actions on nilpotent groups remain open. The main difficulty in this case is that the standard assumptions of no rank one factors does not imply that there are no rank one subgroups, due to the nonabelian nature of the phase space.

The semisimple and toral cases represent the extreme ends of the spectrum as described by the Levi decomposition: $G \cong S \ltimes R$, where $S$ is semisimple and $R$ is solvable (the solvable radical). The semidirect product is given by a representation $\rho : S \to \Aut(\Lie(R)) \subset GL(\Lie(R))$. Any lattice $\Gamma$ in $G$ will respect this semidirect product. That is, $\Gamma \cong \Gamma_0 \ltimes \Lambda$, where $\Gamma_0$ is a lattice in $S$ and $\Lambda$ is a lattice in $R$.

Let $R$ be abelian, and $\rho(\Gamma_0)$ contain an Anosov element (which is equivalent to saying there are no simple summands on which $\rho(\Gamma_0)$ acts by a finite group of isometries). Furthermore, assume that if $\chi$ is a weight of $\rho$ and $r$ is a root of the group $G$ with respect to the same split Cartan subalgebra, then $\chi \not= \lambda r$ for every $\lambda \in \R$ (such representations are {\it non-resonant}). A (partially hyperbolic) twisted Weyl chamber flow is the action of a Cartan action (ie, a generic restriction of the action of a Cartan subalgebra in $S$) on $M = G / \Gamma$. Zhenqi Wang has shown that generic restrictions of such actions (in the sense of \gen) are $C^{\infty,k,\infty}$-rigid for sufficiently large $k$ in the case when $S$ is (semi)split and rigid in the universal cover in the case when $S$ is any semisimple Lie group \cite{zwang-twisted}. Her method relies on choosing nice $G$-orbits inside $M$ which are exactly Weyl chamber flows and correcting the cocycle in each such orbit using methods presented here. The resulting cocycle is trivialized by Zimmer cocycle rigidity. 
The problem then has only a handful of actions resistant to current methods:

\begin{itemize}
\item Actions which satisfy assumptions weaker than \gen
\item Higher-rank automorphism actions on Nilpotent groups
\item ``Non-diagonalizable'' actions in the semisimple and toral cases
\item The twisted case when:
\begin{itemize}
\item  the representation $\rho|_{\Gamma_0}$ has no Anosov element\footnote{It may be the case that no such ergodic actions exist when $G$ is genuinely higher-rank}
\item the representation $\rho$ is resonant
\item the group $R$ is nonabelian
\item $G$ is not semi-split\footnote{Such actions have been shown to be rigid in the universal cover \cite{zwang-twisted}}
\end{itemize}
\end{itemize}

\bibliographystyle{plain}
\bibliography{central}

\end{document}